\newcommand{\Pp}{\mathbb{P}}
\newcommand{\Rr}{\mathbb{R}}
\newcommand{\Qq}{\mathbb{Q}}
\newcommand{\Nn}{\mathbb{N}}
\newcommand{\Dd}{\mathcal{D}}
\newcommand{\Oo}{\mathcal{O}}
\newcommand{\Ii}{\mathcal{I}}
\newcommand{\Jj}{\mathcal{J}}
\newcommand{\Ee}{\mathcal{E}}
\newcommand{\Ttt}{\mathcal{T}}
\newcommand{\Tttt}{\mathcal{CT}}
\newtheorem{theorem}{Theorem}[section]
\newtheorem{lemma}[theorem]{Lemma}
\newtheorem{proposition}[theorem]{Proposition}
\newtheorem{definition}[theorem]{Definition}
\newtheorem{example}[theorem]{Example}
\newtheorem{corollary}[theorem]{Corollary}
\newtheorem{remark}[theorem]{Remark}
\begin{document}

\title{On Fujita's conjecture for pseudo-effective thresholds}

\begin{abstract}
We show Fujita's spectrum conjecture for $\epsilon$-log canonical pairs and Fujita's log spectrum conjecture for log canonical pairs. Then, we generalize the pseudo-effective threshold of a single divisor to multiple divisors and establish the analogous finiteness and the DCC properties.
\end{abstract}

\author{Jingjun Han}
\address{Beijing International Center for Mathematical Research, Peking University,
Beijing 100871, China} \email{hanjingjun@pku.edu.cn}

\author{Zhan Li}
\address{Beijing International Center for Mathematical Research, Peking University,
Beijing 100871, China} \email{lizhan@math.pku.edu.cn}

\maketitle

\tableofcontents

\section{Introduction}\label{sec: introduction}
Let $(X, \Delta)$ be a log pair with $X$ a normal projective variety over $\mathbb{C}$. Suppose $D$ is a big $\Rr$-Cartier divisor, then the \emph{pseudo-effective threshold} of $D$ with respect to $(X, \Delta)$ is defined by
\begin{equation}\label{eq: pseudo-effective threshold}% \cup \{+\infty\}
\tau(X, \Delta; D)\coloneqq \inf\{t \in \Rr_{\geq 0} \mid K_X +\Delta+ tD \ \text{is}\  \text{pseudo-effective}\}.
\end{equation} Recall that an $\Rr$-divisor is called \emph{pseudo-effective} if it is a limit of effective divisors in $N^1(X)_{\Rr}$.
%, where $N^1(X)_{\Rr}$ is the quotient of ${\rm Pic}(X) \otimes_\Zz \Rr$ by the numerical equivalence relation.
%In the case of $M$ is a big $\Rr-$ Cartier divisor, the minus pseudo-effective threshold coincides with the Kodaira energy, which is defined by
%$$\kappa\epsilon(X,\Delta,M):=-\inf\{t\in\Rr_{\ge0}|\kappa(K_X+\Delta+tM)\ge0\}.$$
\medskip

%The inverse of  $\tau(X, \Delta; D)$ is called Kodaira energy, and

Fujita once made the following two conjectures \cite{Fujita92,Fujita96}.

\begin{theorem}[Fujita's spectrum conjecture, \cite{DiC17} Theorem 1.1]\label{thm:fujita}
 Let $n$ be a natural number, $S_n$ be the set of pseudo-effective thresholds $\tau(X,H):=\tau(X,\emptyset;H)$ of an ample divisor $H$ with respect to a smooth projective variety $X$ of dimension $n$. Then $S_n\cap[\epsilon,+\infty)$ is a finite set for any $\epsilon>0$.
 \end{theorem}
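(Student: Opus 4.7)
My plan is to reduce the problem to a boundedness question about Fano varieties via the minimal model program, and then to invoke Birkar's solution of the Borisov--Alexeev--Borisov (BAB) conjecture. First, by BCHM the pseudo-effective threshold $\tau(X,H)$ is always rational, so every element of $S_n$ is rational. Second, the hypothesis $\tau(X,H) \geq \epsilon > 0$ forces $K_X$ to be not pseudo-effective, so $X$ is uniruled by Boucksom--Demailly--Paun--Peternell. I would then run the $K_X$-MMP with scaling of $H$; by BCHM this terminates with a Mori fiber space $\phi \colon X' \to Y$, and a tracking argument along the MMP with scaling shows that the pseudo-effective threshold is preserved, so $\tau(X,H) = \tau(X', H')$ where $H'$ is the strict transform of $H$.

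Restricting to a general fiber $F$ of $\phi$, one obtains a $\Qq$-factorial klt Fano variety of dimension $d \leq n$ with $H'|_F$ ample Cartier. The relative canonical bundle formula, together with $-K_{X'}$ being $\phi$-ample, should give $\tau(X', H') = \tau(F, H'|_F)$. Thus the problem reduces to showing that the set of thresholds $\tau(F, H_F)$, taken over all Fano varieties $F$ of dimension at most $n$ and all ample Cartier divisors $H_F$ on them, meets $[\epsilon, +\infty)$ in only finitely many values.

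For this reduced problem, the key input is BAB: for any $\delta > 0$ and fixed dimension, the class of $\delta$-lc Fano varieties is bounded. Once $F$ is known to be $\delta$-lc for some uniform $\delta = \delta(\epsilon, n) > 0$, the pair $(F, H_F)$ lies in a bounded family; since $H_F$ is integral and the intersection numbers $(-K_F) \cdot H_F^{d-1}$ and $H_F^d$ are bounded above (using $\tau \geq \epsilon$), only finitely many numerical classes of $H_F$ arise, hence only finitely many thresholds.

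The main obstacle I anticipate is establishing the uniform $\delta$-lc bound on the Fano fibers. The MMP starting from a smooth variety produces klt but not a priori $\delta$-lc outputs. One would leverage both the lower bound $\tau \geq \epsilon$ and the integrality of $H$ to exclude ``very singular'' Fano fibers: if $F$ had an arbitrarily deep lc place, one should be able to construct a destabilising effective $\Qq$-divisor in the class of $-K_F - tH_F$ for some $t < \epsilon$, contradicting the threshold bound. Making this rigorous---likely by noetherian induction on dimension combined with repeated applications of BAB and termination of MMP with scaling in lower dimensions---is where the bulk of the technical work will lie.
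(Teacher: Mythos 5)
Your overall architecture (run an MMP to a Mori fibre space, restrict to the Fano fibre, then combine BAB with integrality of intersection numbers against a very ample divisor from the bounded family) is the right one and is essentially the route of Di Cerbo and of this paper. The genuine gap is exactly the point you flag at the end, and your proposed way around it cannot work. The intermediate statement you reduce to --- finiteness in $[\epsilon,+\infty)$ of the thresholds $\tau(F,H_F)$ over \emph{all} klt Fano varieties $F$ of dimension at most $n$ with $H_F$ ample and integral --- is false: Example \ref{ex:counter} exhibits $F=\Pp(1,1,n)$ with the ample \emph{Cartier} divisor $D_3$ and $\tau(F,D_3)=1+\tfrac{2}{n}$, an infinite subset of $[1,+\infty)$. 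For the same reason, your plan to extract a uniform $\delta$-lc bound on $F$ from ``$\tau\ge\epsilon$ plus integrality of $H_F$'' by producing a destabilising divisor is doomed: these examples satisfy both hypotheses and are arbitrarily singular. The $\delta$-lc property of the fibre is not a consequence of the threshold bound; it has to be inherited from the smoothness of $X$.

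Fortunately the inheritance is immediate and none of the technical work you anticipate is needed: every step of the $K_X$-MMP with scaling of $H$ is $K_X$-negative, so the output $X'$ is $\Qq$-factorial \emph{terminal}, not merely klt, and a general fibre $F$ of the resulting Mori fibre space is again terminal, hence $1$-lc. Taking a general $D\in|-mK_F|$ for suitable $m$ and setting $\Delta=\tfrac1m D$, the pair $(F,\Delta)$ is (say) $\tfrac12$-lc with $K_F+\Delta\equiv 0$ and $\Delta$ big, so Theorem \ref{thm: BAB} bounds $F$; then $\tau=(-K_F\cdot M^{\dim F-1})/(H_F\cdot M^{\dim F-1})$ has bounded integer numerator and positive integer denominator, giving finiteness as you say (note that after the MMP $H_F$ is in general only an integral Weil $\Qq$-Cartier divisor, not Cartier, which still suffices for integrality of the denominator). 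Also, no canonical bundle formula is needed for $\tau(X',H')=\tau(F,H'|_F)$: the last extremal ray is $(K+\lambda H)$-trivial and of fibre type with relative Picard number one, so $K_F+\lambda H'|_F\equiv 0$ with $H'|_F$ ample. By contrast, the paper does not track a $K_X$-MMP with scaling: it first replaces $H$ by a general $H'\sim_\Qq H$ so that $(X,\Delta+\tau H')$ is $\tfrac{\epsilon}{2}$-lc, runs the $(K_X+\Delta+\tau H')$-MMP to a good minimal model, and restricts to the fibres of the semiample fibration; this is what allows it to treat arbitrary $\epsilon$-lc pairs with $H$ only big and nef, where terminality is unavailable.
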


\medskip

\begin{theorem}[Fujita's log spectrum conjecture, \cite{DiC16} Theorem 1.2]\label{thm:fujitalog}
 Let $S_n^{ls}$ be the set of pseudo-effective thresholds $\tau(X,\Delta;H)$, where $X$ is a smooth projective variety of dimension $n$, $\Delta$ is a reduced divisor with simple normal crossing support, and $H$ is an ample Cartier divisor on $X$. Then $S_n^{ls}$ satisfies the ACC.
\end{theorem}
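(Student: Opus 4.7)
The plan is to prove the ACC for $S_n^{ls}$ by induction on the dimension $n$, combining the MMP with scaling, the global ACC for numerically trivial log canonical pairs (due to Hacon--McKernan--Xu), and boundedness results for log Fano varieties. I would argue by contradiction: suppose a strictly increasing sequence $\tau_i = \tau(X_i, \Delta_i; H_i) \in S_n^{ls}$ with limit $\tau$; the goal is to force $\tau_i = \tau$ for $i \gg 0$. The first move is to reduce each instance to a Mori fiber space situation. Running the $(K_{X_i}+\Delta_i)$-MMP with scaling of $H_i$ terminates (since $H_i$ is ample), and at the scaling value $\tau_i$ it produces a birational contraction $X_i \dashrightarrow X_i'$ followed by a Mori fiber contraction $f_i \colon X_i' \to Z_i$, with $(X_i',\Delta_i')$ log canonical, $\Delta_i'$ still reduced, and $K_{X_i'} + \Delta_i' + \tau_i H_i'$ numerically trivial over $Z_i$.

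The next step is fiber-theoretic. Restricting to a general fiber $F_i$ of $f_i$, adjunction yields a reduced divisor $\Delta_{F_i} := \Delta_i'|_{F_i}$ with $(F_i, \Delta_{F_i})$ log canonical, an ample divisor $H_i'|_{F_i}$ on $F_i$, and the identity $K_{F_i} + \Delta_{F_i} + \tau_i (H_i'|_{F_i}) \equiv 0$. If $\dim Z_i > 0$, so $\dim F_i < n$, this expresses $\tau_i$ as a pseudo-effective threshold on $F_i$, and the inductive hypothesis applied to $(F_i, \Delta_{F_i}; H_i'|_{F_i})$ closes the argument. If $\dim Z_i = 0$, then $K_{X_i'} + \Delta_i' + \tau_i H_i' \equiv 0$ globally; treating $\Delta_i' + \tau_i H_i'$ as a boundary whose nonzero coefficients lie in the DCC set $\{\tau_j\}_{j \geq 1} \cup \{1\}$, the global ACC for $n$-dimensional numerically trivial log canonical pairs forces this set to be finite, contradicting the strict increase of $\tau_i$. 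To make the induction close one must verify that $H_i'|_{F_i}$ retains enough integrality (say, Cartier or $\Qq$-Cartier of bounded index), which is where the hypothesis that $H_i$ is Cartier and the reducedness of $\Delta_i$ both enter.

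I expect the main obstacle to be the log canonical (rather than klt or $\epsilon$-log canonical) character of $(X,\Delta+\tau H)$: since $\Delta$ is reduced, the pair sits precisely on the boundary of log canonicity, which blocks a direct appeal to Birkar's BAB for $\epsilon$-log canonical Fanos. The expected workaround is to pass through a dlt modification and apply adjunction to minimal log canonical centers, so that the problem reduces to klt pairs of lower dimension, or, in the Mori fiber step, to use the canonical bundle formula to descend the numerical triviality to a generalized log Fano pair on $Z_i$ to which the generalized versions of BAB and of the global ACC apply. A secondary subtlety is the termination of the MMP with scaling in the log canonical (not klt) setting; the standard device is to perturb to a klt pair, run the MMP there, and pass to the limit.
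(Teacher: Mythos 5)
Your overall architecture (argue by contradiction with a strictly increasing sequence, run an MMP to reach a fibration, restrict to a general fiber, invoke a global ACC statement for a numerically trivial pair) matches the skeleton of the paper's proof of the more general Theorem \ref{thm: OneDCC}, of which Theorem \ref{thm:fujitalog} is the special case $\Ii=\{1\}$. However, the two steps that carry the argument do not close as written. The case $\dim Z_i=0$ hinges on applying the Hacon--McKernan--Xu global ACC to $(X_i',\Delta_i'+\tau_i H_i')$, and that theorem requires this pair to be \emph{log canonical}. There is no reason it is: $\tau_i$ may exceed $1$, and even when it does not, the divisor $H_i'$ is only a numerical/linear equivalence class, and the strict transform on $X_i'$ of a general member of $|H_i|$ can acquire non-lc singularities after the contractions of the MMP. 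This is precisely the obstruction the paper isolates in the introduction (``$(X,\Delta+\tau H)$ may not be log canonical, and we cannot apply Global ACC directly''). Your closing paragraph names plausible tools (dlt modifications, adjunction to lc centers, a canonical bundle formula on $Z_i$) but leaves them as ``expected workarounds''; the paper's actual resolution is different and cleaner: keep $\tau H$ as the \emph{nef part} of a generalized polarized pair, for which generalized lc-ness is automatic from lc-ness of $(X,\Delta)$, and apply Theorem \ref{thm: genACC} (Birkar--Zhang's global ACC for generalized pairs) to the coefficient $\mu_j\tau$ of the nef part restricted to the fiber.

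The second gap is that your induction on $n$ does not return you to the class $S_{n-1}^{ls}$. The general fiber $F_i$ of the Mori fiber space is in general singular, $\Delta_i'|_{F_i}$ need not be reduced SNC, and $H_i'|_{F_i}$ is big but neither ample nor Cartier (only its pullback to a resolution is a nef Cartier class); so the inductive hypothesis, as stated for smooth $X$, reduced SNC $\Delta$, and ample Cartier $H$, simply does not apply to $(F_i,\Delta_{F_i};H_i'|_{F_i})$. To make a restriction argument close one must formulate from the outset a statement stable under these operations -- this is exactly why the paper proves Theorem \ref{thm: OneDCC} (lc pairs, DCC coefficients, $H=\sum\mu_iH_i$ with each $H_i$ nef Cartier and $H$ big). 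With that formulation no induction on dimension is needed at all: both the case $\dim Z>0$ and the case $\dim Z=0$ are handled uniformly by the generalized global ACC applied to the (possibly full-dimensional) fiber. A correct write-up should either prove the stronger statement directly, as the paper does, or carefully strengthen the inductive hypothesis; as it stands, neither branch of your dichotomy is justified.
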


Fujita showed that his spectrum conjecture is a consequence of the minimal model program and the BAB (Borisov-Alexeev-Borisov) conjecture, \cite{Fujita96}. Recently, Di Cerbo studied these two problems. He proved Fujita's spectrum conjecture by using the special BAB conjecture, \cite{DiC17}, and proved Fujita's log spectrum conjecture by using the ACC for the lc thresholds, \cite{DiC16}.

\medskip
Recall that for a partially ordered set $(\mathcal S, \succeq)$, it is said to satisfy the DCC (\emph{descending chain condition}) if any non-increasing sequence $a_1 \succeq a_2 \succeq \cdots \succeq a_k \succeq \cdots$ in $S$ stabilizes. It is said to satisfy the ACC (\emph{ascending chain condition}) if any non-decreasing sequence in $S$ stabilizes. When $S$ is a set of real numbers, we consider the usual relation ``$\le$''.

\medskip

In this paper, we study Fujita's spectrum conjecture and Fujita's log spectrum conjecture in a more general setting, namely we allow the pair $(X,\Delta)$ to have singularities, and the coefficients of $\Delta$ and $H$ to vary in some fixed set.

\medskip

Fix a positive integer $n$, a positive real number $\epsilon$, and a subset $\Ii\subset[0,1]$. We will consider the following set.
\[
\begin{split}
\Ttt_{n,\epsilon}(\Ii)\coloneqq &\{\tau(X,\Delta;H)\mid \dim X=n, (X,\Delta) \text{~is~} \epsilon \text{-lc},\Delta\in \Ii,\\
&  H\text{ is a big and nef $\Qq$-Cartier Weil divisor}\}.
\end{split}
\]
We can now state one of the main results in this paper.
\begin{theorem}\label{thm: OneDCCfinite}
Let $n$ be a natural number, and $\epsilon$ be a positive real number. %, and $\Ii\subset[0,1]$. %Then,
\begin{enumerate}
\item If $\Ii\subset[0,1]$ is a finite set, then $\Ttt_{n,\epsilon}(\Ii)\cap[\delta,+\infty)$ is a finite set for any $\delta>0$.
\item If $\Ii\subset[0,1]$ is a DCC set, then $\Ttt_{n,\epsilon}(\Ii)$ satisfies the ACC.
\end{enumerate}
\end{theorem}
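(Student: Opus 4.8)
The strategy is to reduce Theorem~\ref{thm: OneDCCfinite} to known results on the boundedness/DCC side of the minimal model program, essentially following the philosophy of Di Cerbo's arguments but feeding in the $\epsilon$-lc hypothesis to invoke BAB (now a theorem of Birkar). First I would make the standard observation that $\tau(X,\Delta;H)$ is a log canonical threshold–type invariant in disguise: since $K_X+\Delta+tH$ pseudo-effective is equivalent to $t\ge\tau$, and since $H$ is big and nef, one expects $\tau(X,\Delta;H)$ to be \emph{rational} and in fact computed on a minimal model. So the first step is to run a $(K_X+\Delta)$-MMP with scaling of $H$; because $(X,\Delta)$ is $\epsilon$-lc (hence klt), the MMP terminates (or at least we may use the relevant termination/existence results available for klt pairs), and we arrive at a model $(X',\Delta')$ on which either $K_{X'}+\Delta'$ is nef — in which case $\tau=0$ and there is nothing to prove — or there is a Mori fibre space structure $X'\to Z$ witnessing $\tau$.

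**The core reduction.** On the Mori fibre space $f\colon X'\to Z$, the threshold $\tau(X,\Delta;H)$ equals $\tau$ computed on the general fibre $F$, i.e.\ $\tau(X,\Delta;H)=\tau(F,\Delta'|_F;H'|_F)$, where $(F,\Delta'|_F)$ is $\epsilon$-lc, Fano-type, of dimension $\le n$. This is where the $\epsilon$-lc hypothesis does the real work: by Birkar's theorem on the BAB conjecture, the $\epsilon$-lc Fano varieties $F$ of dimension $\le n$ form a bounded family, so one may take a very ample $A$ with bounded $A^{\dim F}$ and $(-K_F)\cdot A^{\dim F - 1}$ across the family. Writing $\tau$ in terms of intersection numbers against $-K_F$ and $H|_F$, and noting that $H$ is a Weil divisor so the relevant intersection numbers lie in a discrete set once boundedness is fixed, one concludes: in case (1), with $\Ii$ finite, the possible values of $\tau$ bounded below by $\delta$ are finite; in case (2), with $\Ii$ DCC, the coefficients of $\Delta'|_F$ form a DCC set, and the ACC for $\tau$ follows from an ACC-type statement for these intersection-theoretic/lct expressions — concretely, one invokes the ACC for log canonical thresholds (Hacon–M\textsuperscript{c}Kernan–Xu) applied on the bounded family, exactly as in the proof of Theorem~\ref{thm:fujitalog}, but now on $\epsilon$-lc rather than lc pairs so that no further adjustment is needed.

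**Handling the DCC/ACC bookkeeping.** A technical point I would spell out carefully: the MMP with scaling produces $\Delta'$ whose coefficients still lie in $\Ii$ (coefficients don't grow under this MMP), and the pushforward $H'$ of $H$ is still big and nef; the key is that the family of pairs $(F,\Delta'|_F+\tau H'|_F)$ — which is log canonical but \emph{not} $\epsilon$-lc, since at $t=\tau$ we sit exactly on the pseudo-effective boundary — varies in a bounded family with boundary coefficients in a DCC (resp. finite) set, so the global ACC for lc thresholds applies. I would also note the reduction ``$\tau$ on $X$ equals $\tau$ on the fibre'' requires that pseudo-effectivity of $K_{X'}+\Delta'+tH'$ be detected fibrewise, which holds because $X'\to Z$ is a Mori fibre space and $-(K_{X'}+\Delta')$ is $f$-ample, a standard argument via the negativity lemma / relative cone theorem.

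**Main obstacle.** The hard part is not any single step but the interface between them: ensuring that the bounded family produced by BAB can be chosen \emph{uniformly} so that both the finiteness statement and the ACC statement follow from a single ACC-for-lct input, and in particular controlling the coefficients of the boundary on the fibre after the MMP with scaling — one must check these remain in $\Ii$ (for finiteness) or in the original DCC set (for ACC) and do not get perturbed by the adjunction to the fibre. A secondary subtlety is the case analysis when the MMP does not terminate with a Mori fibre space of the expected dimension, or when $H'$ degenerates on the output model; here I would argue that $\tau>0$ forces the non-nef case and that $H$ big and nef is preserved well enough for the intersection-number argument to go through. Once these are pinned down, both (1) and (2) drop out of boundedness plus the ACC for log canonical thresholds.
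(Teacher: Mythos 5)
Your overall strategy (run an MMP, land on a bounded Fano fibre via BAB, read off $\tau$ from intersection numbers) is the right one, and it is essentially the skeleton of the paper's proof. However, there is a genuine gap and a genuine confusion in the middle that the paper handles by a step you omit.

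The gap: you cannot simply restrict $\Delta + \tau H$ to a fibre and expect good singularities, because $H$ is merely a big and nef Weil divisor whose support is not general; the pair $(X,\Delta + \tau H)$ may fail even to be lc, and then BAB as stated (which needs a boundary $B$ with $(F,B)$ $\epsilon'$-lc, $K_F+B\equiv 0$, $B$ big) does not apply. The paper's fix, which you skip, is to exploit $H$ big and nef to write $H\sim_{\Qq} A_k + E/k$, choose $k$ large, replace a multiple of $A_k$ by a general member of its very ample linear system, and obtain $H'\sim_{\Qq} H$ such that $(X,\Delta + \tau H')$ is $\tfrac{\epsilon}{2}$-lc. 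Only then does one run the $(K_X+\Delta+\tau H')$-MMP (which terminates by BCHM since the boundary is big), contract via the semiample $K_Y+\phi_*\Gamma$, restrict to a general fibre $F$, and invoke Theorem~\ref{thm: BAB}. This also clears up your remark that the restricted pair is ``log canonical but not $\epsilon$-lc, since at $t=\tau$ we sit exactly on the pseudo-effective boundary'': sitting on the pseudo-effective boundary is a statement about positivity, not about discrepancies, and after the perturbation the fibre pair is in fact $\tfrac{\epsilon}{2}$-lc, which is exactly what BAB requires.

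A secondary point: for part (2) you propose to invoke the ACC for lc thresholds on the bounded family. The paper does not do this and does not need to. Once boundedness of $F$ is established, intersecting the numerically trivial relation $K_F + \phi_*\Delta|_F + \tau H_Y|_F\equiv 0$ with $M_F^{\dim F - 1}$ gives a single equation $\tau d + \sum_j a_j b_j = c$ with $d\in\Zz_{>0}$, $b_j\in\Zz_{\ge 0}$, $a_j\in\Ii$, and $c$ ranging over a finite set (because $-K_F\cdot M_F^{\dim F-1}$ is bounded). Part (1) then follows because $\tau\ge\delta$ forces $d$ and $b_j$ to be bounded, and part (2) follows because $\tau = \tfrac{1}{d}\bigl(c - \sum_j a_j b_j\bigr)$ lies in an ACC set whenever $\sum_j a_j b_j$ lies in a DCC set. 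This is more elementary and avoids any appeal to HMX-type ACC theorems, which in the paper are reserved for Theorem~\ref{thm: OneDCC} where $H$ is not assumed big and one must work with generalized pairs.
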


%In Theorem \ref{thm: OneDCCfinite} (1), the case ``$X$ is smooth, and $\Ii=\emptyset$'' was a question asked by Di Cerbo (cf. \cite{DiC17} P.244), and furthermore, let $H$ be an ample Cartier divisor, we get Fujita's spectrum conjecture.
Roughly speaking, we generalize Di Cerbo's result from a smooth variety with an ample divisor $H$ (Theorem \ref{thm:fujitalog} and Theorem \ref{thm:fujita}) to an $\epsilon$-lc pair with a big and nef divisor. Theorem \ref{thm: OneDCCfinite}(1) was a question asked by Di Cerbo (cf. \cite{DiC17} P.244).

\medskip

Our argument of the above result relies on the minimal model program and the recent progress on the BAB conjecture. In fact, we only need to use the special BAB conjecture to prove Theorem \ref{thm: OneDCCfinite} (see Theorem \ref{thm: BAB} and the remark below). In some sense, the $\epsilon$-lc condition is the weakest possible condition for Theorem \ref{thm: OneDCCfinite} to hold. If we relax the singularities from $\epsilon$-lc to klt, Theorem \ref{thm: OneDCCfinite} is no longer true, see Example \ref{ex:counter} for more details. However, if $H$ is assumed to be an ample Cartier divisor in Theorem \ref{thm: OneDCCfinite} (2), then the condition ``$(X,\Delta)$ is lc'' is enough. %In some sense, it is optimal.

%In order to apply the BAB conjecture, we require that the pair $(X,\Delta)$ is $\epsilon$-lc. It is interesting to know whether this condition can be replaced by ``$(X,\Delta)$ is klt or lc''. If we relax the singularities from $\epsilon$-lc to klt, we remark that Theorem \ref{thm: OneDCCfinite} is no longer true, see Example \ref{ex:counter} for more details. However, if we require $H$ to be an ample Cartier divisor in Theorem \ref{thm: OneDCCfinite} (2), we can prove that the condition ``$(X,\Delta)$ is lc'' is enough. In some sense, it is optimal.

%Our first result is about Fujita's log spectrum conjecture for lc pairs.
\begin{theorem}\label{thm: OneDCC}
Let $n$ be a natural number and $\Ii$ a DCC set of nonnegative real numbers. Let $\Tttt_{n}(\Ii)$ be the set of pseudo-effective threshold $\tau(X,\Delta;H)$ which satisfies the following conditions.
\begin{enumerate}
\item $X$ is a normal projective variety of dimension $n$,
\item $(X, \Delta)$ is lc and the coefficients of $\Delta$ are in $\Ii$, and
\item $H=\sum \mu_iH_i$, where $H_i$ is a nef Cartier divisor for each $i$, $\mu_i\in\Ii$, and $H$ is big.
\end{enumerate}
Then $\Tttt_{n}(\Ii)$ satisfies the ACC.
\end{theorem}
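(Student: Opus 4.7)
The plan is to argue by contradiction. Assume a strictly increasing sequence $(\tau_k)_{k\geq 1}$ in $\Tttt_n(\Ii)$, realised by data $(X_k,\Delta_k,H_k=\sum_i\mu_{k,i}H_{k,i})$. Since the $\tau_k$ are eventually positive, $K_{X_k}+\Delta_k$ is not pseudo-effective, so we run a $(K_{X_k}+\Delta_k)$-MMP with scaling of $H_k$; for lc pairs with scaling of a big divisor this terminates (by results of Birkar, extending Hacon--Xu) with a Mori fibre space $\phi_k\colon Y_k\to Z_k$ satisfying $K_{Y_k}+\Delta_k'+\tau_k H_k'\equiv_{Z_k}0$, where primes denote birational transforms. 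Passing to a general fibre $F_k$ of $\phi_k$ when $\dim Z_k\geq 1$, or setting $F_k=Y_k$ when $\dim Z_k=0$, adjunction yields
\[
K_{F_k}+\Delta_k'|_{F_k}+\tau_kH_k'|_{F_k}\equiv 0
\]
on the lc pair $(F_k,\Delta_k'|_{F_k})$ of Fano type.

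I interpret $(F_k,\Delta_k'|_{F_k},\tau_kH_k'|_{F_k})$ as a generalized log Calabi--Yau pair, whose boundary has coefficients in the DCC set $\Ii$ and whose nef part decomposes as $\tau_kH_k'|_{F_k}=\sum_i(\tau_k\mu_{k,i})(H_{k,i}'|_{F_k})$ with each $H_{k,i}'|_{F_k}$ a b-nef b-Cartier b-divisor inherited from the Cartier divisor $H_{k,i}$ on $X_k$. A key observation is that the set of nef-part coefficients $\mathcal M\coloneqq\{\tau_k\mu_{k,i}\}_{k,i}$ is itself DCC: any strictly decreasing sequence $(\tau_{k_j}\mu_{k_j,i_j})_j$ has $\mu_{k_j,i_j}$ attaining a minimum $\mu_\ast\in\Ii$ at some $j_0$, after which the inequality $\tau_{k_j}<\tau_{k_{j_0}}$ for $j>j_0$ forces the indices $k_j$ into the finite set $\{1,\ldots,k_{j_0}-1\}$, and the DCC of $\Ii$ rules out an infinite decreasing tail in the $\mu_{k,i_j}$ for each fixed $k$, giving a contradiction.

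Now apply the global ACC for generalized log Calabi--Yau pairs (due to Birkar, extending the Hacon--McKernan--Xu global ACC) to the DCC set $\Ii\cup\mathcal M$: all coefficients $\tau_k\mu_{k,i}$ lie in a common finite subset $\Ii_0\subset\Ii\cup\mathcal M$. For each $k$ choose an index $i(k)$ with $\mu_{k,i(k)}>0$ and set $c_k\coloneqq\tau_k\mu_{k,i(k)}\in\Ii_0$; passing to a subsequence, $c_k$ is constantly some $c$, so $\mu_{k,i(k)}=c/\tau_k$ is strictly decreasing in $\Ii$, contradicting the DCC of $\Ii$.

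The main obstacle is applying the global ACC in a setting where the nef-part coefficients $\tau_k\mu_{k,i}$ a priori belong to the set $\mathcal M$, which is not manifestly DCC: verifying the DCC of $\mathcal M$ from the DCC of $\Ii$ and the well-ordering of $(\tau_k)$ is the crucial intermediate step. A secondary input is the termination of the lc MMP with scaling of the big divisor $H_k$, which is ensured by recent advances in the lc minimal model program.
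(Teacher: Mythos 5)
Your argument is correct and its core coincides with the paper's: produce a fibration on which $K+\Delta'+\tau H'$ is relatively numerically trivial, restrict to a general fibre to obtain a generalized lc log Calabi--Yau pair whose nef part has coefficients $\tau_k\mu_{k,i}$, check that these form a DCC set, and invoke the Global ACC for generalized pairs (Theorem \ref{thm: genACC}) to force $\tau_k\mu_{k,i(k)}$ into a finite set, which contradicts the DCC of $\Ii$ once the $\tau_k$ strictly increase; your verification that $\{\tau_k\mu_{k,i}\}$ is DCC and your final extraction of the contradiction are exactly the (more compressed) steps in the paper's proof, spelled out in greater detail. You differ only in the MMP plumbing. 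The paper runs a $(K_X+\Delta+\tau H)$-MMP to a good minimal model and uses the semiample fibration of $K_Y+\phi_*(\Delta+\tau H)$, while you run a $(K_X+\Delta)$-MMP with scaling of $H$ to a Mori fibre space; both work, though your route needs the extra (true, and tacitly used) check that the terminal scaling number equals $\tau_k$, which follows from bigness of $H'|_F$ on a general fibre. More significantly, the paper deliberately avoids running an MMP on an arbitrary lc pair: it first treats the $\Qq$-factorial klt case, where \cite{BCHM10} suffices, and then reduces the lc case to it by replacing $\Delta$ with $(1-\epsilon_i)\Delta$ for suitable $\epsilon_i\to 0$ chosen so that the perturbed thresholds still strictly increase and the coefficients remain in a DCC set. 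Your appeal to termination of the lc MMP with scaling (existence of Mori fibre spaces for lc pairs with $K_X+\Delta$ not pseudo-effective) is available in the literature, but it is strictly heavier machinery than the paper uses; either cite it precisely or adopt the paper's perturbation, which costs only a few lines after the dlt modification of Lemma \ref{lem:qfactorial}.
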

The version where $(X,\Delta)$ is log smooth, $\Delta$ is reduced, and $H$ itself is an ample divisor (Theorem \ref{thm:fujitalog}) was proven by Di Cerbo by the ACC for the log canonical thresholds and Global ACC for the lc pairs. Roughly speaking, the main difficulty in our setting comes from the fact that $(X,\Delta+\tau(X,\Delta;H)H)$ may not be log canonical, and we can not apply Global ACC directly. Instead, we use Global ACC for generalized polarized pairs, \cite{BZ16}, which generalizes Global ACC for the lc pairs, \cite{HMX14}, to the generalized lc pairs. The notation and the theory of generalized polarized pairs were introduced and developed in \cite{BZ16,Bir16a}. Furthermore, we can show a slightly stronger version of Theorem \ref{thm: OneDCC} by using an effective birationality result for generalized polarized pairs of general type established in \cite{BZ16}, see Theorem \ref{thm: OneDCC2}.

%we can not control the singularity of $(X,\Delta+\tau(X,\Delta;H)H)$ in the usual sense during the process of minimal model program. We use Global ACC for generalized pairs to prove Theorem \ref{thm: OneDCC}. We remark that Global ACC for generalized pairs was established in \cite{BZ16}, and the theory of generalized polarized pairs plays an important role in the proof of the BAB conjecture, \cite{Bir16a,Bir16b}. % instead of the BAB conjecture

\medskip
%If we allow the singularity of $(X, \Delta)$ to be $\epsilon$-lc, we can generalize both Fujita's spectrum conjecture and Fujita's log spectrum conjecture.

 %However, if we require $H_i$ to be ample divisors in Theorem \ref{thm: DCC}, we can prove that the condition ``$(X,\Delta)$ is lc'' is enough for a single divisor. In some sense, it is optimal.
%, and Theorem \ref{thm: DCC} is no longer true even for a single ample Weil divisor if we relax the singularities from $\epsilon$-lc to klt

%, but is similar to \cite{Fujita96}. In fact, we develop and simplify Fujita's idea. % and thus also yields a proof in that setting.

It is natural to generalize the pseudo-effective threshold of a single divisor to multiple divisors. Let $(X, \Delta)$ be a log pair and $D_1,\ldots,D_m$ be big $\Rr$-Cartier divisors on $X$. The \emph{pseudo-effective polytope} (PE-polytope) of $D_1,\ldots,D_m$ with respect to $X$ is defined as
\begin{equation}\label{eq: PE-polytope}
\begin{split}
P(X, \Delta;D_1,\ldots,& D_m)\coloneqq \{(t_1,\ldots,t_m) \in\mathbb{R}_{\geq 0}^m \mid \\
&(K_X+\Delta+t_1D_1+\cdots+t_mD_m) ~\text{is}~\text{pseudo-effective}\}.
\end{split}
\end{equation}

We will show in Proposition \ref{prop: polytope} that $P(X, \Delta;D_1,\ldots,D_m)$ is indeed an unbounded polytope if $(X, \Delta)$ has klt singularities and $D_i$ is a big and nef $\Qq$-Weil divisor for each $i$. For convenience, we include all $(t_1,\ldots,t_m) \in\mathbb{R}_{\geq 0}^m$ such that $K_X+\Delta+\sum_{i=1}^mt_iD_i$ is pseudo-effective in $P(X, \Delta;D_1,\ldots,D_m)$, though the \emph{thresholds} only happen on the boundary. In particular, by comparing with \eqref{eq: pseudo-effective threshold}, we see that $P(X, \Delta;D)$ is the interval $[\tau(X, \Delta; D),+\infty)$.

\medskip

Fix two positive integers $n,m$, a subset $\Ii\subset[0,1]$, a positive real number $\epsilon$, and a nonnegative real number $\delta$, we consider the following set of truncated PE-polytopes.
\[
\begin{split}
{\mathcal P}_{n,m,\Ii,\epsilon,\delta}\coloneqq &\{P(X,\Delta;H_1,\ldots,H_m)\cap [\delta,+\infty)^m \mid \dim X=n, (X,\Delta) \text{~is~} \epsilon \text{-lc},\\
& \Delta\in \Ii, H_i\text{ is a big and nef $\Qq$-Cartier Weil divisor for each } i\}.
\end{split}
\]
For simplicity, if $\delta=0$, we denote ${\mathcal P}_{n,m,\Ii,\epsilon}\coloneqq{\mathcal P}_{n,m,\Ii,\epsilon,\delta}$. We will show the following results for PE-polytopes.

\begin{theorem}\label{thm: fujita}
Let $n,m$ be two natural numbers and $\epsilon, \delta$ be two positive real numbers, and $\Ii\subseteq [0,1]$ be a finite set. Then the set of truncated PE-polytope ${\mathcal P}_{n,m,\Ii,\epsilon,\delta}$ is finite.
\end{theorem}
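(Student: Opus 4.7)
The plan is to combine the polytope structure from Proposition \ref{prop: polytope} with the MMP--BAB argument underlying Theorem \ref{thm: OneDCCfinite}(1), extended to the multi-divisor setting. By Proposition \ref{prop: polytope}, $P := P(X, \Delta; H_1, \ldots, H_m)$ is a rational polytope in $\Rr_{\geq 0}^m$; since adding any nef $\Rr$-divisor to a pseff one preserves pseff, $P$ is upward-closed with recession cone $\Rr_{\geq 0}^m$. The truncated polytope $P \cap [\delta,\infty)^m$ therefore also has recession cone $\Rr_{\geq 0}^m$ and is completely determined by its finitely many vertices, so it suffices to show that the set of possible vertex tuples in $\Rr^m$ is finite as $(X,\Delta,H_1,\ldots,H_m)$ ranges over the allowed data.

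For a vertex $v$ of $P \cap [\delta,\infty)^m$ lying in the relative interior $(\delta,\infty)^m$ of the truncation region, $v$ is a vertex of $P$ itself and sits on $\partial P$. Setting $H(v) := \sum v_i H_i$, which is big and nef, I would run the $(K_X+\Delta+H(v))$-MMP. Since $\Delta + H(v)$ is big, BCHM-type results produce a good minimal model $X'$ on which $K_{X'}+\Delta'+H(v)'$ is semi-ample but not big (the latter because $v \in \partial P$), and hence defines a fibration $\phi : X' \to Z$ with $K_{X'}+\Delta'+H(v)' \equiv 0/Z$. A general fiber $F$ is $\epsilon$-lc and satisfies $-(K_F+\Delta'|_F) \equiv \sum v_i (H_i'|_F)$ in $N^1(F)_\Rr$, and the vertex condition on $v$ prevents all $H_i'|_F$ from being numerically trivial simultaneously (otherwise $v$ would lie in a positive-dimensional face of $P$), so $F$ is a genuine $\epsilon$-lc weak Fano. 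By the special BAB conjecture (Theorem \ref{thm: BAB}), such $F$ lie in a bounded family. The vertex condition moreover forces the linear map $\Rr^m \to N^1(F)_\Rr$, $(t_i) \mapsto \sum t_i [H_i'|_F]$, to have rank $m$ at $v$, so $v$ is uniquely pinned down by these classes on $F$. Combining the uniform Cartier-index bound on the BAB family with the a priori upper bound on each $H_i'|_F \cdot (-K_F)^{\dim F - 1}$ (obtained by pairing the above numerical equivalence with $(-K_F)^{\dim F - 1}$ and using $v_i \geq \delta > 0$), the classes $[H_i'|_F]$ lie in a finite set, and hence so does $v$.

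For vertices of $P \cap [\delta,\infty)^m$ lying on the truncation boundary (where $v_i = \delta$ for some nonempty index set $S \subsetneq \{1,\ldots,m\}$), I would induct on $m$: fixing those coordinates at $\delta$, the remaining coordinates form a vertex of a PE-polytope of the same shape for the divisors $(H_j)_{j \notin S}$, with the combination $\delta \sum_{i \in S} H_i$ kept as a fixed nef summand throughout the MMP analysis. The argument of the previous paragraph then applies with $m$ replaced by $|\{1,\ldots,m\}\setminus S|$, yielding finiteness of the remaining coordinates.

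The main obstacle is the passage from BAB boundedness of $F$ to a genuine finiteness of the classes $[H_i'|_F]$ in $N^1(F)$. This requires both a uniform Cartier-index bound for $\Qq$-Cartier divisors on $F$ in the BAB family and the non-degeneracy $H_i'|_F \cdot (-K_F)^{\dim F - 1} > 0$ for each $i$ at a vertex. The truncation $v_i \geq \delta > 0$ is essential here, since without it some coefficient could escape to infinity and the vertex would cease to be bounded; this is also the reason an analogous statement fails for $\delta = 0$. Verifying these two ingredients rigorously, together with the bookkeeping that the MMP output $X'$ retains enough information about each $H_i$ individually (rather than only their combination $H(v)$) to bound the classes $[H_i'|_F]$ separately, is the technical heart of the proof.
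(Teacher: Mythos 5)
Your overall architecture (reduce to finiteness of the boundary data of the polytope, run an MMP at a boundary point, restrict to a general fiber of the resulting Calabi--Yau fibration, and invoke Theorem \ref{thm: BAB}) matches the paper's, but the step where you pin down a vertex has a genuine gap. You claim that at a vertex $v$ of $P$ the map $\Rr^m \to N^1(F)_{\Rr}$, $(t_i) \mapsto \sum t_i [H_i'|_F]$, has rank $m$, so that the relation $K_F + \Delta'|_F + \sum v_i H_i'|_F \equiv 0$ determines $v$. This does not follow from $v$ being a vertex, and it fails in general: if the general fiber $F$ of the fibration associated to $K_X+\Delta+H(v)$ has Picard number $1$ (e.g.\ a Mori fiber space) and $m \geq 2$, all the classes $[H_i'|_F]$ are proportional and the numerical relation cuts out an affine hyperplane in $\Rr^m$, not a point. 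The geometric reason is that a vertex is the intersection of $m$ facets, and different facets of $P$ correspond in general to \emph{different} MMP outcomes and fibrations; the single fibration you run at $v$ only ``sees'' one supporting condition. A secondary gap is the passage from BAB-boundedness of $F$ to finiteness of the classes $[H_i'|_F]$ in $N^1(F)$: a bound on the single intersection number $H_i'|_F\cdot(-K_F)^{\dim F-1}$ does not bound the class unless $\rho(F)=1$, and extracting a uniform Cartier index and a finite list of classes from the bounded family requires a further argument you have not supplied.

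The paper avoids both issues by aiming at facets rather than vertices, and at intersection numbers rather than numerical classes. For an arbitrary boundary point $\tau$ of the truncated polytope interior to $[\delta,+\infty)^m$, it intersects the relation $K_F+\phi_*\Delta|_F+\sum\tau_i H_{Y,i}|_F\equiv 0$ with $M^{\dim F-1}$ for a very ample $M$ coming from boundedness, producing a single integral linear equation $\sum_j a_jb_j+\sum_i \tau_i d_i=c$ in which $c$ is bounded and, because $\tau_i\ge\delta$ and $\Ii$ is finite, the positive integers $d_i,b_j$ are bounded as well. Hence every such boundary point lies on one of finitely many hyperplanes, every facet of the polytope (by irreducibility) is contained in one of these hyperplanes or in some $\{t_i=\delta\}$, and finiteness of ${\mathcal P}_{n,m,\Ii,\epsilon,\delta}$ follows. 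If you want to rescue your vertex-based approach, you would need to first establish this facet-level finiteness anyway, at which point the vertex analysis is redundant.
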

Letting $m=1$ in Theorem \ref{thm: fujita}, we get Theorem \ref{thm: OneDCCfinite} (1).
\medskip

%When $S$ is a subset of real numbers, we consider the usual ``greater or equal'' order ``$\geq$'', and

\begin{theorem}\label{thm: DCC}
Let $n,m$ be two natural numbers ,$\epsilon$ be a positive real number, and $\Ii\subseteq [0,1]$ a DCC set. %Let $\Tt_{n,m,\epsilon}(\Ii)$ be the set of PE-polytopes $P(X,\Delta;H_1,\ldots,H_m)$ which satisfies the following conditions.
\begin{comment}
\begin{enumerate}
\item $X$ is a normal projective variety of dimension $n$,
\item $(X, \Delta)$ is $\epsilon$-lc and the coefficients of $\Delta$ are in $\Ii$, and
\item $H_i$ is a big and nef $\Qq$-Cartier Weil divisor for each $i$.
\end{enumerate}
\end{comment}
%Then $\Tt_{n,m,\epsilon}(\Ii)$ is a DCC set under inclusion.
Then the set of PE-polytopes ${\mathcal P}_{n,m,\Ii,\epsilon}$ is a DCC set under the inclusion relation ``$\supseteq$''.
\end{theorem}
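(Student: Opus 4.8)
The plan is to deduce Theorem \ref{thm: DCC} by essentially the same mechanism that yields Theorem \ref{thm: fujita}, but tracking DCC-ness of data instead of finiteness. The starting point is the observation that a PE-polytope $P(X,\Delta;H_1,\ldots,H_m)$ is an unbounded rational polytope (Proposition \ref{prop: polytope}), so it is determined by its finitely many facets, equivalently by the set of its vertices together with its (finitely many) recession rays. Thus, to show $\mathcal{P}_{n,m,\Ii,\epsilon}$ is a DCC set under ``$\supseteq$'', I would first reduce to controlling the vertices: if $P \supseteq P'$ then every vertex of $P$ lies in $P'$, and an infinite strictly decreasing chain $P_1 \supsetneq P_2 \supsetneq \cdots$ would force, after passing to a subsequence, a genuinely infinite behavior in the coordinates of the vertices (or in the directions of the recession cone). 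So the heart of the matter is: the set of vertices appearing among all $P \in \mathcal{P}_{n,m,\Ii,\epsilon}$, and the set of supporting rational linear functionals defining the facets, should satisfy appropriate finiteness/DCC constraints forced by the ACC-type input.

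The key input is the ACC for pseudo-effective thresholds of the form $\tau(X,\Delta; \sum a_i H_i)$ when the $a_i$ range in a DCC set: this is exactly the content of Theorem \ref{thm: OneDCCfinite}(2) applied to the single big and nef $\Qq$-divisor $\sum a_i H_i$, provided the $a_i$ are chosen so that this divisor is again a big and nef $\Qq$-Cartier Weil divisor — one handles this by clearing denominators, or more robustly by invoking the generalized-pairs version (Theorem \ref{thm: OneDCC}/\ref{thm: OneDCC2}) which allows $H = \sum \mu_i H_i$ directly. Concretely, the boundary $\partial P(X,\Delta;H_1,\ldots,H_m)$ is the graph of the function $(a_1,\ldots,a_m) \mapsto$ (threshold in a suitable normalization); each facet of $P$ corresponds to a linear relation, and the ``vertical intercepts'' of these facets are precisely thresholds of big and nef divisors $\sum a_i H_i$ with $a_i$ in a DCC set. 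Here is the step order: (i) recall $P$ is a rational polytope and reduce $\supseteq$-chains to chains in vertex data and recession-cone data; (ii) show the recession cones come from a fixed finite list (they depend only on which nonnegative combinations $\sum t_i H_i$ can be big, a condition on $n$, $m$, and the nef cone, independent of $\Delta$ and the coefficients — this is genuinely a finiteness statement and is the easier half); (iii) show the vertices satisfy a DCC-type constraint in each coordinate by relating each vertex to a pseudo-effective threshold of a big-and-nef combination with DCC coefficients, then apply the ACC from Theorem \ref{thm: OneDCCfinite}(2); (iv) conclude that a strictly decreasing chain in $\mathcal{P}_{n,m,\Ii,\epsilon}$ would produce a strictly increasing chain of such thresholds, contradicting ACC.

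The main obstacle I anticipate is step (iii): relating an arbitrary vertex of $P(X,\Delta;H_1,\ldots,H_m)$ to a single pseudo-effective threshold in a way that makes the relevant scalar lie in a controlled DCC set. A vertex is the solution of $m$ simultaneous linear equations (it lies on $m$ facets), and it is not a priori a threshold along a coordinate direction. The fix is to argue directionally: along any rational ray through the origin with rational slopes $(c_1,\ldots,c_m)$, the point where $K_X + \Delta + s\sum c_i H_i$ becomes pseudo-effective is $s = \tau(X,\Delta; \sum c_i H_i)$, and $\sum c_i H_i$ is big and nef; by rescaling we may assume $\gcd$-type normalization so that $\sum c_i H_i$ is a Weil divisor and the $c_i$ come from a controlled set. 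Since the polytope is cut out by its behavior along such rays, an infinite $\supsetneq$-chain forces, along some fixed rational direction (by a pigeonhole/compactness argument on the finitely many possible combinatorial types of the polytopes — there are boundedly many face lattices since the recession cone is from a finite list and the number of facets is bounded in terms of $n,m$), an infinite strictly increasing sequence of thresholds $\tau(X_k,\Delta_k; \sum c_i H_{i,k})$, contradicting the ACC provided by Theorem \ref{thm: OneDCCfinite}(2) (the coefficients $c_i$ being fixed, or at worst ranging in a fixed finite set, so they do lie in a DCC set together with $\Ii$).

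A second, more technical point to be careful about: one must ensure that when we pass to a subsequence to fix the combinatorial type, the ``support functions'' of the facets (the primitive integral normal vectors) are also eventually constant, or at least that the decreasing chain on polytopes with a fixed combinatorial type reduces to a monotone chain on the intercept parameters. This follows because for polytopes with the same recession cone and the same set of facet normals, inclusion is equivalent to coordinatewise inequality on the offsets, so $P_k \supsetneq P_{k+1}$ for all $k$ forces some offset to strictly decrease infinitely often, i.e.\ some threshold to strictly increase infinitely often. The bound on the number of facet normals (hence on combinatorial types modulo the offsets) is what lets us invoke pigeonhole; this bound is a standard consequence of the fact that $P$ is defined inside $\Rr_{\geq 0}^m$ by the pseudo-effectivity condition, which after running the MMP becomes a finite conjunction of linear inequalities with controlled (indeed, in the finite-$\Ii$ case, finitely many; in the DCC case, DCC-coefficient) normals — here again one leans on Theorem \ref{thm: fujita} for the shape and on Theorem \ref{thm: OneDCCfinite}(2)/Theorem \ref{thm: OneDCC} for the DCC control of the numerics.
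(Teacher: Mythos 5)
Your overall frame (reduce to $\Qq$-factorial, use Proposition \ref{prop: polytope} to know $P$ is a polytope, and extract numerical invariants from boundary points via MMP and BAB) matches the paper, but the combinatorial core of your argument has a genuine gap. You reduce a strictly decreasing chain $P^{(1)}\supsetneq P^{(2)}\supsetneq\cdots$ to a strictly increasing sequence of thresholds along a \emph{fixed rational ray through the origin}, justified by a pigeonhole on ``finitely many combinatorial types'' and a claim that ``the number of facets is bounded in terms of $n,m$'' with facet normals from a finite list. Neither claim is available: the facet normals produced by the intersection-theoretic argument are arbitrary tuples of nonnegative integers $(d_1,\ldots,d_m)$ (a countably infinite set, since $\Ii$ is only DCC and the bounded family gives no uniform bound on the $d_i$ when the $\tau_i$ are not truncated away from $0$), and nothing bounds the number of facets of a single $P^{(k)}$. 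Worse, the reduction to rays through the origin is precisely the failure mode the paper flags in the introduction (Figure \ref{fig: example}): one can have a strictly decreasing chain of convex polytopes that stabilizes along every vertical line, every horizontal line, and every line through the origin. So even if each directional threshold satisfies the ACC — which is what Theorem \ref{thm: OneDCCfinite}(2) gives you — that does not force the chain of polytopes to stabilize. Your fallback (``for fixed recession cone and fixed facet normals, inclusion is coordinatewise inequality on offsets'') is correct as a statement about polytopes, but you never get to fix the normals.

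The paper's proof circumvents this as follows. It shows that every facet of every $P^{(k)}$ not contained in a coordinate hyperplane lies on one of \emph{countably} many hyperplanes $L^{(s)}=0$ with $L^{(s)}=\sum_i d_i^{(s)}t_i+\sum_j a_j^{(s)}b_j^{(s)}-c^{(s)}$, where $d_i^{(s)},b_j^{(s)}\in\Nn$, $a_j^{(s)}\in\Ii$, and $c^{(s)}$ ranges in a finite set (this uses a measure-theoretic argument: a facet not contained in any of countably many hyperplanes would be a countable union of measure-zero sets). A strictly decreasing chain then yields facets $\Theta^{(k)}$ on pairwise distinct such hyperplanes; passing to a subsequence using the DCC of $\Ii$ and of $\Nn$, one arranges $c^{(k)}$ constant and $d_i^{(k)}$, $\sum_j a_j^{(k)}b_j^{(k)}$ nondecreasing, so that the half-spaces $\{L^{(k)}\ge 0\}\cap\Rr_{\ge0}^m$ (each containing $P^{(k)}$) are \emph{increasing}, which is incompatible with $P^{(k)}\supsetneq P^{(k+1)}\supseteq\Theta^{(k+1)}\subseteq\{L^{(k+1)}=0\}$. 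The point is that the monotonization is done on the DCC coefficient data of the supporting hyperplanes themselves, not by freezing a finite combinatorial type or testing one direction at a time. To repair your proposal you would need to replace steps (ii)--(iv) by an argument of this kind.
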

Letting $m=1$ in Theorem \ref{thm: fujita}, we get Theorem \ref{thm: OneDCCfinite} (2).
%Theorem \ref{thm: DCC} does not imply Theorem \ref{thm: OneDCC} since we require that the pair $(X, \Delta)$ is $\epsilon$-lc.
\medskip

We note that we can not apply Theorem \ref{thm: OneDCCfinite} to prove Theorem \ref{thm: fujita} and Theorem \ref{thm: DCC} directly. For example, suppose we consider two testing divisors, and thus $P:=P(X,\Delta;H_1,H_2)$ is a two dimensional polytope (suppose it is non-degenerate). It is possible to construct a sequence of strictly decreasing sequence of convex polytopes $\{P_i\}_{i\in\Nn}$, such that  $\{P_i\}_{i\in\Nn}$ stabilizes along any vertical line, any horizontal line, and any line passing through the origin. For example, in Figure \ref{fig: example}, the sequence of polytopes $\{P_i\}_{i\in\Nn}$ is not stable near the point $\tau$. It is our hope that Theorem \ref{thm: fujita} and Theorem \ref{thm: DCC} would give more information on the testing divisors.
\begin{figure}[ht]
      \centering
\begin{tikzpicture}

\draw [help lines,<->] (0.0,3.2)--(0.0,0)--(3.2,0);
\draw [dashed](0.2,0)--(0.2,3.2);
\draw [dashed](0.8,0)--(0.8,3.2);
\draw [dashed](1.5,0)--(1.5,3.2);
\draw (0.1,3)--(0.9,0.8);
\draw [thick] (0.9,0.8)--(3.1,0);
\draw [thick](0.5,1.9)--(2.0,0.4);
\draw [thick](0.3, 2.45)--(1.25, 1.15);
\draw [help lines,->] (0.4,2.6)--(-0.4,2.6);
\draw [help lines,->] (0.5,2.1)--(-0.4,2.1);
\draw [help lines,->] (0.7,1.6)--(-0.4,1.6);

\node[left] at (-0.4,2.6) {\footnotesize$P_3$};
\node[left] at (-0.4,2.1) {\footnotesize$P_2$};
\node[left] at (-0.4,1.6) {\footnotesize$P_1$};
\node [below] at (0.3,3.2) {\footnotesize$\tau$};
\end{tikzpicture}
\caption{}
\label{fig: example}
\end{figure}

\medskip

%\cite{HJ16} once proved a result on smooth uniruled projective variety by using Theorem \ref{thm:fujita}.
As a corollary of Theorem \ref{thm: OneDCCfinite}(1), by the same argument of \cite{HJ16}, we can improve their main result for big and nef divisors (rather than big and semiample). This corollary was firstly proven by \cite{LTT} assuming a weak version of the BAB conjecture.

Recall that for smooth variety $X$, and a big $\Rr$-Cartier divisor $L$ on $X$, the $a$-constant is defined by $a(X,L):=\tau(X,L)$. For a singular projective variety $X$, the $a$-constant is defined by $a(X,L):=a(Y,\pi^{*}L)$, where $\pi: Y\to X$ is any log resolution of $(X,L)$.

\begin{corollary}[\cite{LTT} Theorem 4.10]\label{cor:LTT} Let $X$ be a smooth uniruled projective variety and $L$ a big and nef $\Qq$-divisor on $X$. Then there exists a proper closed subset $W\subset X$ such that every subvariety $Y$ satisfying $a(Y,L)>a(X,L)$ is contained in $W$.
\end{corollary}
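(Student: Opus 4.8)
The plan is to run the argument of \cite{HJ16} essentially verbatim, making one substitution: where \cite{HJ16} invokes the finiteness of pseudo-effective thresholds for big and \emph{semiample} divisors, we instead invoke Theorem~\ref{thm: OneDCCfinite}(1), which is available for big and \emph{nef} divisors; this is exactly the ingredient that upgrades the conclusion of \cite{LTT} from ``big and semiample'' to ``big and nef''. First we dispose of the augmented base locus: since $L$ is big, $W_0:=\mathbf{B}_+(L)$ is a proper closed subset of $X$, and any subvariety $Y\not\subseteq W_0$ has $L|_Y$ big; moreover $L|_Y$ is nef, being the restriction of a nef divisor. Thus it suffices to produce a proper closed subset $W_1\subsetneq X$ containing every $Y\not\subseteq W_0$ with $a(Y,L)>a(X,L)$, and then set $W:=W_0\cup W_1$.

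The key step is to bound the set of \emph{jumping values} $\{a(Y,L)\}$. For $Y\not\subseteq W_0$, let $\pi_Y\colon\tilde Y\to Y$ be a log resolution; then $\tilde Y$ is smooth, so $(\tilde Y,\emptyset)$ is $1$-lc, the divisor $\pi_Y^*(L|_Y)$ is a big and nef $\Qq$-Cartier Weil divisor on $\tilde Y$, and by definition $a(Y,L)=\tau(\tilde Y,\emptyset;\pi_Y^*(L|_Y))\in\Ttt_{\dim Y,1}(\{0\})$. Since $X$ is uniruled, $K_X$ is not pseudo-effective, so $a(X,L)>0$. Applying Theorem~\ref{thm: OneDCCfinite}(1) with $\epsilon=1$, $\Ii=\{0\}$ and $\delta=a(X,L)$ in each dimension $d\le n-1$ shows that $\bigcup_{d\le n-1}\bigl(\Ttt_{d,1}(\{0\})\cap[a(X,L),+\infty)\bigr)$ is a finite set; consequently there is a real number $v>a(X,L)$ such that every $Y\subsetneq X$ with $Y\not\subseteq W_0$ and $a(Y,L)>a(X,L)$ in fact satisfies $a(Y,L)\ge v$.

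Next we establish the covering-family estimate. Suppose $f\colon\mathcal Y\to T$ is an algebraic family of subvarieties of $X$ with irreducible base, whose members sweep out a dense open subset of $X$ and whose general member $Y_t$ has $a(Y_t,L)\ge v$. After resolving $\mathcal Y$ and replacing $T$ by a general linear section of dimension $n-\dim Y_t$, we may assume $\mathcal Y$ is smooth and $g\colon\mathcal Y\to X$ is generically finite and dominant, with general fibre $F$ a resolution of $Y_t$, so $a(F,g^*L|_F)=a(Y_t,L)\ge v>a(X,L)$. On one hand, $g$ generically finite between smooth varieties makes $K_{\mathcal Y}-g^*K_X$ effective, hence $a(\mathcal Y,g^*L)\le a(X,L)$. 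On the other hand, the general fibre $F$ has trivial normal bundle in $\mathcal Y$, so $K_F=K_{\mathcal Y}|_F$, and since $F$ moves in a family covering $\mathcal Y$, a pseudo-effective class on $\mathcal Y$ restricts to a pseudo-effective class on $F$ (a movable curve class on $F$ pushes forward to a movable class on $\mathcal Y$, and one uses the duality between pseudo-effective divisors and movable curves); therefore $a(F,g^*L|_F)\le a(\mathcal Y,g^*L)\le a(X,L)$, a contradiction.

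Finally we conclude by Noetherian induction. If the union of all $Y\subsetneq X$ with $Y\not\subseteq W_0$ and $a(Y,L)\ge v$ were Zariski dense in $X$, then, covering this union by the countably many components of the Chow variety of $X$ and using that $X$ is not a countable union of proper closed subsets, some component would have the property that its members with $a(\,\cdot\,,L)\ge v$ sweep out a dense subset of $X$; by lower semicontinuity of the $a$-invariant together with the previous step (which forces $\{a(\,\cdot\,,L)>a(X,L)\}=\{a(\,\cdot\,,L)\ge v\}$ on this component), the locus of such members is closed, so it contains an irreducible algebraic subfamily whose members still cover a dense open subset of $X$ and \emph{all} satisfy $a(\,\cdot\,,L)\ge v$, contradicting the covering-family estimate. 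Hence such $Y$ lie in a proper closed subset $W_1\subsetneq X$, and $W:=W_0\cup W_1$ works. We expect the fiddliest point to be this last step — turning a ``dense union of bad subvarieties'' into a single honest algebraic family of uniformly bad members, which is where the semicontinuity of $a$ and the finiteness of jumping values are both needed — but it is carried out exactly as in \cite{HJ16}; the only genuinely new ingredient is the second step, where Theorem~\ref{thm: OneDCCfinite}(1) for big and nef divisors replaces the semiample version.
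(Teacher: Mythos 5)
Your proposal matches the paper's proof exactly in approach: the paper's argument is simply to run the proof of \cite{HJ16} with Theorem~\ref{thm: OneDCCfinite}(1) substituted for Theorem~\ref{thm:fujita}, and your reconstruction of the \cite{HJ16} argument (removing $\mathbf{B}_+(L)$, finiteness of jumping values via the spectrum theorem, the covering-family estimate, and the Chow-variety/Noetherian reduction) is faithful to that reference. The only step left implicit is the routine scaling of $L$ to clear denominators so that $\pi_Y^*(L|_Y)$ is an integral $\Qq$-Cartier Weil divisor, as required for membership in $\Ttt_{\dim Y,1}(\{0\})$; since the $a$-constant is homogeneous of degree $-1$ in $L$, this is harmless.
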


Recall that we only use the special BAB conjecture to prove Theorem \ref{thm: OneDCCfinite} (1), thus yields another proof of the above corollary.
\begin{comment}
{\bf
Is the following true? $X$ smooth, $H$ ample $\Qq$-Cartier, coefficients in a DCC set. $C$ a curve on $X$. Then is there a lower bound for $H \cdot C$?

\medskip

Example: Let $E = \Oo_{\Pp^1}\otimes \Oo_{\Pp^1}(-1)$ be a rank $2$ bundle on $\Pp^1$. The total space is the Hirzebruch surface $F_1$. Let $f$ be the class of fibre and $e$ be a section of self-intersection $-1$, then
$\overline{NE}(X) = \Rr_+ f + \Rr_+e$. We have $f\cdot f=0, f\cdot e=1$. Consider $H_{m,n}=\frac{m-1}{m} f + \frac{n-1}{n} e$ where $m>n$ are natural numbers. Then $H_{m,n}\cdot e = \frac{m-1}{m} -\frac{n-1}{n}, H_{m,n}\cdot f= \frac{n-1}{n}$.   $H_{m,n}$ is an ample $\Qq$-divisor whose coefficients are in a DCC set, but $H_{m,n}\cdot e = \frac{m-1}{m} -\frac{n-1}{n}$ could be arbitrary small.
}
\end{comment}
\medskip

Finally, it is reasonable to propose the following conjecture for PE-polytopes, which was proven in Theorem \ref{thm: OneDCC} for a single divisor.
\medskip

\textbf{DCC for PE-polytopes.} Let $n,m$ be two natural numbers, $\Ii$ be a DCC set of nonnegative real numbers. Then the set of PE-polytopes
\[
\begin{split}
{\mathcal P}_{n,m,\Ii}\coloneqq &\{P(X,\Delta;H_1,\ldots,H_m)\mid \dim X=n, (X,\Delta) \text{~is~} \text{lc},\\
& \Delta\in \Ii, H_i\text{ is a big and nef Cartier divisor for each }i\}
\end{split}
\]
satisfies the DCC under the inclusion of polytopes.
%We remark that the condition $\epsilon-$lc can not be replaced by $klt$ even if for a single ample divisor $H_i$, and $\Ii=\emptyset$, see Example \ref{ex:counter} for more details.
\medskip

%It also seems worth point out that the other remaining conjecture is:\text

\noindent\textbf{Acknowledgements}.
J.H. started this work during his visiting at Princeton University from Sep. 2015--Sep. 2016 under the supervision of J\'{a}nos Koll\'{a}r, and with the support of China Scholarship Council (CSC) and ``Training, Research and Motion'' (TRAM) network. He wishes to thank them all. We thank Chenyang Xu for giving not only the initial inspiration on this research topic, but also many valuable comments on a previous draft. We thank Gabriele Di Cerbo for useful E-mail correspondences, and Chen Jiang for helpful discussions. J.H. would like to thank his advisors Gang Tian and Chenyang Xu in particular for constant support and encouragement. This work is partially supported by NSFC Grant No.11601015 and the Postdoctoral Grant No.2016M591000.

\section{Preliminaries}\label{sec: preliminaries}
\subsection{Singularities} For basic definitions of log discrepancies and log canonical (lc), divisorially log terminal (dlt), kawamata log terminal (klt) singularities, we refer to \cite{KM98}. Recall that a log pair $(X,\Delta)$ is $\epsilon$-lc for some $\epsilon \geq 0$, if its minimal log discrepancy is greater or equal to $\epsilon$. % Chapter 2

For reader's convenience, we state the following lemma which is known as dlt modifications. %and is due to Hacon.
\begin{lemma}[dlt modifications, c.f. \cite{HMX14} Proposition 3.3.1]
Let $(X,\Delta)$ be a lc pair. There there is a proper birational morphism $f:X'\to X$ with reduced exceptional divisors $E_i$, such that
\begin{enumerate}
\item $(X',\Delta'):=(X',f_{*}^{-1}\Delta+\sum E_i)$ is dlt,
\item $X'$ is $\Qq$-factorial,
\item $K_{X'}+\Delta'=f^{*}(K_X+\Delta)$.
\end{enumerate}
\noindent In particular, if $(X,\Delta)$ is klt, then $f$ is small.
\end{lemma}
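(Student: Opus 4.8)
The plan is to construct $f$ by running a minimal model program over $X$, starting from a log resolution. First I would choose a log resolution $g\colon Y\to X$ of $(X,\Delta)$, let $F_1,\dots,F_r$ be the $g$-exceptional prime divisors, and set $\Gamma\coloneqq g_*^{-1}\Delta+\sum_j F_j$. Then $(Y,\Gamma)$ is log smooth, hence $\Qq$-factorial dlt, and writing
\[
K_Y+\Gamma=g^*(K_X+\Delta)+E ,
\]
the divisor $E$ is $g$-exceptional, and the coefficient of $F_j$ in $E$ is the log discrepancy $a(F_j;X,\Delta)$. Since $(X,\Delta)$ is lc these log discrepancies are all $\ge 0$, so $E\ge 0$; in particular $K_Y+\Gamma\sim_{\Rr,X}E\ge 0$.

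Next I would run a $(K_Y+\Gamma)$-MMP over $X$, say with scaling of an ample divisor. The existence and termination of such an MMP for a pair whose log canonical divisor is, over the base, $\Rr$-linearly equivalent to an effective divisor is standard: for klt pairs it follows from \cite{BCHM}, and the dlt case reduces to the klt one by a small perturbation of the boundary — this is exactly the situation of \cite[Proposition~3.3.1]{HMX14}. Every step of this MMP is a $\Qq$-factorial, dlt-preserving divisorial contraction or flip over $X$. Let $f\colon X'\to X$ be the output, let $\Delta'$ be the birational transform of $\Gamma$, and let the $E_i$ be the $f$-exceptional prime divisors that were not contracted, so $\Delta'=f_*^{-1}\Delta+\sum_i E_i$, the pair $(X',\Delta')$ is $\Qq$-factorial dlt, and $K_{X'}+\Delta'$ is nef over $X$. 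Setting $E'\coloneqq(K_{X'}+\Delta')-f^*(K_X+\Delta)$, this divisor is $f$-exceptional (its pushforward to $X$ vanishes), and its coefficient along each $E_i$ equals the log discrepancy $a(E_i;X,\Delta)\ge 0$, so $E'\ge 0$. This already yields (1) and (2).

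It remains to prove (3), i.e. $E'=0$. Since $K_{X'}+\Delta'$ is nef over $X$ and $f^*(K_X+\Delta)$ is numerically trivial over $X$, the $f$-exceptional $\Rr$-divisor $E'$ is nef over $X$; as $f_*E'=0$, the negativity lemma (\cite{KM98}) forces $E'\le 0$, hence $E'=0$ and $K_{X'}+\Delta'=f^*(K_X+\Delta)$. For the last assertion, recall that the coefficient of a surviving exceptional divisor $E_i$ in $E'$ is its log discrepancy $a(E_i;X,\Delta)$; if $(X,\Delta)$ is klt this is strictly positive, which contradicts $E'=0$ unless there are no surviving exceptional divisors at all, i.e. unless $f$ is small.

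The one genuinely substantive input — and the step I expect to be the main obstacle to a self-contained argument — is the existence and termination of the $(K_Y+\Gamma)$-MMP over $X$, which rests on \cite{BCHM} together with the routine reduction from the dlt case to the klt case; everything else is bookkeeping with discrepancies and a single application of the negativity lemma.
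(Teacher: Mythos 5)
The paper gives no proof of this lemma; it is quoted as a known result with a pointer to \cite{HMX14}, Proposition 3.3.1. Your argument --- log resolution, relative $(K_Y+\Gamma)$-MMP over $X$, and the negativity lemma to force $E'=0$ (with the klt case read off from the strict positivity of the log discrepancies of any surviving exceptional divisor) --- is precisely the standard proof given in that reference, and it is correct; you have also correctly isolated the one nontrivial external input, namely existence and termination of the MMP with scaling over $X$, which rests on \cite{BCHM10}.
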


We can apply dlt modifications to reduce the study of PE-polytopes from lc (resp. klt) pairs to $\Qq$-factorial dlt (resp. klt) pairs.
\begin{lemma}\label{lem:qfactorial}
  Let $(X,\Delta)$ be a lc pair, and $H_i$ be a big $\Rr$-Cartier divisor for any $1\le i\le m$. %Let $\Ii$ be the set such that the coefficients of $\Delta$ and $H_i$ are in $\Ii$.
  If $f:(X',\Delta')\to (X,\Delta)$ is a dlt modification of $(X,\Delta)$, then
  $$P(X,\Delta; H_1,\ldots,H_m)=P(X',\Delta'; H'_1,\ldots,H'_m),$$
  where $H'_i=f^{*}H_i$.
\end{lemma}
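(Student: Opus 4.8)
The plan is to deduce the equality of the two PE-polytopes from the crepant identity $K_{X'}+\Delta'=f^{*}(K_X+\Delta)$ supplied by the dlt modification lemma, together with the elementary fact that pseudo-effectiveness of an $\Rr$-Cartier divisor is preserved both by pullback and by pushforward along a proper birational morphism.

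First I would fix $(t_1,\ldots,t_m)\in\Rr_{\geq 0}^m$ and write $D:=K_X+\Delta+\sum_{i=1}^{m}t_iH_i$ and $D':=K_{X'}+\Delta'+\sum_{i=1}^{m}t_iH'_i$. Since $H'_i=f^{*}H_i$ and $K_{X'}+\Delta'=f^{*}(K_X+\Delta)$, we have $D'=f^{*}D$. (Note also that $f^{*}H_i$ is again big and $\Rr$-Cartier, so the right-hand polytope is defined.) Thus $(t_1,\ldots,t_m)$ lies in $P(X,\Delta;H_1,\ldots,H_m)$ precisely when $D$ is pseudo-effective on $X$, and it lies in $P(X',\Delta';H'_1,\ldots,H'_m)$ precisely when $f^{*}D$ is pseudo-effective on $X'$; so it suffices to show that $D$ is pseudo-effective on $X$ if and only if $f^{*}D$ is pseudo-effective on $X'$.

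For the forward implication, $f^{*}\colon N^1(X)_{\Rr}\to N^1(X')_{\Rr}$ is linear and continuous and sends the class of an effective Cartier divisor to the class of its (effective) pullback, hence carries the pseudo-effective cone of $X$ into that of $X'$; in particular $f^{*}D$ is pseudo-effective whenever $D$ is. Conversely, if $f^{*}D$ is pseudo-effective, apply $f_{*}$: this map is linear and continuous and sends effective divisors to effective divisors, hence sends pseudo-effective classes to pseudo-effective classes, and $f_{*}f^{*}D=D$ because $f$ is birational and $D$ is $\Rr$-Cartier. Therefore $D$ is pseudo-effective, which finishes the equivalence and hence the lemma.

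There is no serious obstacle here. The only points requiring any care are the two transport statements for the pseudo-effective cone under $f^{*}$ and $f_{*}$, which are standard, and the verification that $f^{*}H_i$ remains a big $\Rr$-Cartier divisor so that the target polytope makes sense; both are routine. The same argument works verbatim for the klt case, where $f$ is moreover small.
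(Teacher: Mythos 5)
Your proof is correct and follows essentially the same route as the paper: both directions reduce to the crepant identity $K_{X'}+\Delta'+\sum t_iH'_i=f^{*}(K_X+\Delta+\sum t_iH_i)$, with pullback handling one inclusion and pushforward (using $f_{*}f^{*}D=D$) the other. You merely spell out the standard transport of the pseudo-effective cone under $f^{*}$ and $f_{*}$ in slightly more detail than the paper does.
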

\begin{proof}
  On one hand, let $(t_1,\ldots,t_m)\in P(X,\Delta; H_1,\ldots,H_m)$, then $K_X+\Delta+\sum_{i=1}^m t_iH_i$ is pseudo-effective. We have $K_{X'}+\Delta'+\sum_{i=1}^m t_iH_i'=f^{*}(K_X+\Delta+\sum_{i=1}^m t_iH_i)$ is also pseudo-effective.

  On the other hand, let $(t_1,\ldots,t_m)\in P(X',\Delta'; H'_1,\ldots,H'_m)$, $K_{X'}+\Delta'+\sum_{i=1}^m t_iH'_i$ is pseudo-effective. We have $K_X+\Delta+\sum_{i=1}^m t_iH_i=f_{*}(K_{X'}+\Delta'+\sum_{i=1}^m t_iH_i')$ is pseudo-effective.
\end{proof}

\subsection{Boundedness of Fano varieties}
A set of varieties $\mathcal{X}$ is said to form a \emph{bounded family} if there is a projective morphism of schemes $g: W\to T$, with $T$ of finite type, such that for every $X\in\mathcal{X}$, there is a closed point $t\in T$ and an isomorphism $W_t \simeq X$, where $W_t$ is the fibre of $g$ at $t$. A variety $X$ is called \emph{Fano} if it is lc and $-K_X$ is ample. The following result is a variant of the conjecture of Borisov-Alexeev-Borisov. %, and is proved by Birkar.%It is called \emph{$\epsilon$-lc Fano} if it is Fano and $\epsilon$-lc.

\begin{theorem}[\cite{Bir16b} Corollary 1.2]\label{thm: BAB}
Let $n$ be a natural number and $\epsilon$ a positive real number. Then the projective varsities $X$ such that
\begin{enumerate}
\item $(X,\Delta)$ is $\epsilon$-lc of dimension $n$ for some boundary $\Delta$, and
\item $K_X+\Delta\equiv_{\Rr} 0$ and $\Delta$ is big,
\end{enumerate}
form a bounded family.
\end{theorem}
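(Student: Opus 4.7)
This statement is the $\epsilon$-lc form of the Borisov--Alexeev--Borisov conjecture proved by Birkar, so the plan is to sketch the high-level strategy of his approach, organized around three major ingredients.

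The first step is to reduce to the case of $\epsilon$-lc Fano varieties. Since $\Delta$ is big, write $\Delta \sim_{\Rr} A + B$ with $A$ ample and $B \geq 0$ effective; then $-K_X \equiv_{\Rr} \Delta$ is big, so $X$ is of Fano type. By running a suitable MMP on $K_X + B$ (using that $-K_X$ is big so we have lots of flexibility) and shrinking $\epsilon$ by an arbitrarily small amount if needed, one can reduce to the case where $X$ itself is an $\epsilon$-lc Fano variety. The conclusion concerns only $X$, not $\Delta$, so such reductions are legitimate.

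The second and core step is \emph{boundedness of complements}: I would aim to show there exists a positive integer $N = N(n,\epsilon)$ and an effective $\Qq$-divisor $\Gamma \geq 0$ such that $N(K_X + \Gamma) \sim 0$ and $(X,\Gamma)$ remains lc (indeed one can arrange it to be $\epsilon'$-lc for some $\epsilon' = \epsilon'(n,\epsilon)$). This is proved by induction on $n$: one lifts sections of multiples of $-K_X$ from lc centres produced by tie-breaking, and uses an adjunction formula whose discriminant and moduli parts are controlled inductively. The inductive machine runs in the category of generalized polarized pairs from \cite{BZ16}, whose ACC properties are essential for closing the induction.

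The third step converts the $N$-complement into effective birationality and a volume bound. Using the $N$-complement together with non-klt centres and multiplier ideal arguments, one shows $|-mK_X|$ defines a birational map for some $m = m(n,\epsilon)$. Combined with Kawamata's upper bound on $\mathrm{vol}(-K_X)$ (from the $\epsilon$-lc hypothesis) and the lower bound from effective birationality, one obtains a uniformly bounded embedding of $X$ into a fixed projective space with bounded degree, hence boundedness of the family.

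The main obstacle is the second step. Boundedness of complements for $\epsilon$-lc Fanos is the technical heart: the induction on dimension requires simultaneously controlling the index $N$, preserving the singularity bound under adjunction to lc centres, and lifting sections back to $X$ against potential base-locus issues. This is where the theory of generalized pairs is indispensable, since the moduli part of the canonical bundle formula is not $\Qq$-Cartier in general and only behaves well when treated as a \emph{nef part} of a generalized pair.
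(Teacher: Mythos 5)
This statement is not proved in the paper at all: it is quoted verbatim as \cite{Bir16b}, Corollary 1.2, and used as a black box. So there is no internal argument to compare yours against; the only question is whether your outline would stand on its own as a proof, and it would not. What you have written is a roadmap of Birkar's two papers \cite{Bir16a, Bir16b}, and while the three ingredients you name (reduction to Fano type, boundedness of complements, effective birationality plus volume bounds) are genuinely the right ones, each of them is itself a deep theorem occupying dozens of pages, so the proposal is a plan rather than a proof. In the context of the present paper the correct move is simply to cite the result, which is what the authors do.

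Beyond the issue of depth, there is one concrete gap in your step three. Effective birationality of $|-mK_X|$ for bounded $m$, combined with an upper bound on $\mathrm{vol}(-K_X)$, yields only \emph{birational} boundedness: $X$ admits a birational model in a bounded family, but $X$ itself need not. Passing from birational boundedness to actual boundedness is precisely the main new content of \cite{Bir16b}, and it requires the lower bound on lc thresholds of anti-pluricanonical (indeed $\Rr$-linear) systems of $\epsilon$-lc Fano type varieties — Theorems 1.4 and 1.6 of that paper — which controls the singularities of the birational map to the bounded model and allows one to conclude via the boundedness criteria of \cite{HMX14}-type arguments. Your sketch omits this step entirely, and it is not a formality: without it the argument proves a strictly weaker statement. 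Relatedly, your claim in step two that the $N$-complement $(X,\Gamma)$ can be arranged to be $\epsilon'$-lc for $\epsilon'$ depending only on $(n,\epsilon)$ is not part of the complements theorem (which produces an lc, not klt, complement); the singularity control comes from the separate lc-threshold theorem just mentioned.
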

\begin{remark}
The special BAB conjecture, which assumes that the coefficients of $\Delta$ are more than or equal to a positive real number $\delta$ in Theorem \ref{thm: BAB}, was firstly proven in \cite{Bir16a}. In order to show Theorem \ref{thm: OneDCCfinite} and Theorem \ref{thm: fujita}, we only need to apply the special BAB conjecture.
\end{remark}

\subsection{Generalized polarized pairs}
The theory of generalized pair was developed in \cite{BZ16}.
\begin{definition}[generalized polarized pairs]
  A generalized polarized pair consists of a normal variety $X$ equipped with projective morphisms
\[
W \xrightarrow{f} X \to Z
\]
where $f$ is birational, $W$ is normal, an $\Rr$-boundary $\Delta\ge0$, and an $\Rr$-Cartier divisor $H_W$ on $W$ which is nef over $Z$ such that $K_X+\Delta+H$ is $\Rr$-Cartier with $H=f_{*}H_W$. We call $\Delta$ the boundary part and $H_W$ the nef part. We usually refer to the pair by saying $(X,\Delta+H)$ is a generalized pair with data $\xymatrix@C=0.5cm{W \ar[r]^{f} & X \ar[r] & Z}$ and $H_W$.
\end{definition}
%In thisWhen $Z$ is
In this paper, we only need to use the case that $Z$ is a point. Thus, we will drop $Z$, and say the pair is projective. Note that if $W'\to W$ is a projective birational morphism from a normal variety, then there is no harm in replacing $W$ with $W'$ and replacing $H_W$ with its pullback to $W'$.
\begin{definition}[generalized lc]
  Let $(X,\Delta+H)$ be a generalized polarized pair, which comes with the data $\xymatrix@C=0.5cm{W \ar[r]^{f} & X \ar[r] & Z},$ and replacing $W$, we may assume that $f$ is a log resolution of $(X,\Delta)$. We can write
  $$K_W+\Delta_W+H_W=f^{*}(K_X+\Delta+H)$$
  for some uniquely determined $\Delta_W$. %For a prime divisor $D$ on $W$ the generalized log discrepancy $a(D,X,\Delta+H)$ is $1-\mu_D B$.
  We say $(X,\Delta+H)$ is {\emph generalized lc} if every coefficient of $\Delta_W$ is less than or equal to 1.
\end{definition}
\begin{remark}[\cite{BZ16} Remark 4.2(6)]Let $(X,\Delta+H)$ be a generalized projective pair with data $\xymatrix@C=0.5cm{W \ar[r]^{f} & X}$ and $H_W$. We may assume that $f$ is a log resolution of $(X,\Delta)$. Assume that there is a contraction $X\to Y$. Let $F$ be a general fibre of $W\to Y$, $T$ the corresponding fibre of $X\to Y$, and $g: F\to T$ the induced morphism. Let
$$\Delta_{F}=\Delta_W|_{F}, H_{F}=H_W|_{F},\Delta_T=g_{*}(\Delta_{F}),H_{T}=g_{*}H_{F}.$$
Then $(T,\Delta_T+H_{T})$ is a generalized polarized projective pair with the data $\xymatrix@C=0.5cm{F \ar[r]^{g} & T}$ and $H_{F}$. Moreover,
$$K_T+\Delta_T+H_T=(K_X+\Delta+H)|_{T}.$$
In addition, $\Delta_{T}=\Delta|_{T}$ and $H_{T}=H|_{T}$.
\end{remark}
For more properties of generalized polarized pairs, we refer to \cite{BZ16,Bir16a}. We will need the following result to prove Theorem \ref{thm: OneDCC}.
\begin{theorem}[Global ACC for generalized pairs, \cite{BZ16} Theorem 1.6]\label{thm: genACC}
Let $\Ii$ be a DCC set of nonnegative real numbers and $n$ a natural number. Then there is a finite subset $\Ii_0\subseteq \Ii$ depending only on $\Ii,n$ such that
\begin{enumerate}
\item $X$ is a normal projective variety of dimension n,
\item $(X,\Delta+H)$ is generalized lc with data $\xymatrix@C=0.5cm{W \ar[r]^{f} & X}$ and $H_W$,
\item $H_W=\sum\mu_j H_{j,W}$, where $H_{j,W}$ are nef Cartier divisors and $\mu_j\in\Ii$,
\item $\mu_j=0$ if $H_{j,W}\equiv 0$,
\item the coefficients of $\Delta$ belong to $\Ii$, and
\item $K_X+\Delta+H\equiv_{\Rr}0$,
\end{enumerate}
\noindent then the coefficients of $\Delta$ and $\mu_j$ belong to $\Ii_0$.
\end{theorem}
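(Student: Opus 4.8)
The plan is to imitate the Hacon--McKernan--Xu proof of the usual Global ACC \cite{HMX14}: argue by induction on $n=\dim X$, with a reduction to lower-dimensional generalized pairs by adjunction, but now carry the nef part $H_W$ along at every step. Suppose the statement fails for a fixed $n$ and DCC set $\Ii$. Then, after passing to a subsequence, there are generalized polarized projective pairs $(X_i,\Delta_i+H_i)$ of dimension $n$, with data $W_i\to X_i$ and $H_{W_i}=\sum_j\mu_{j,i}H_{j,W_i}$, satisfying (1)--(6), for which the coefficients of the $\Delta_i$ and the numbers $\mu_{j,i}$ can be arranged into strictly increasing sequences in $\Ii$, with limits $a_k$, $m_j$ in $\overline{\Ii}$. (Hypothesis (4) is exactly the normalization that makes it possible to pin down the $\mu_j$, since they may be altered freely whenever $H_{j,W}\equiv 0$.) Using the generalized analogue of the dlt modification lemma from \cite{BZ16}, replace each pair by a crepant $\Qq$-factorial generalized dlt model; this preserves $K_{X_i}+\Delta_i+H_i\equiv_{\Rr}0$ and only adds boundary components of coefficient $1$, so we may assume each $(X_i,\Delta_i+H_i)$ is $\Qq$-factorial generalized dlt. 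Then split into two cases according to whether $(X_i,\Delta_i+H_i)$ has a generalized non-klt center properly contained in $X_i$.

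In the \textbf{non-klt case}, assume infinitely many of the pairs have such a center and pick a minimal one, $S_i$. Generalized adjunction (as developed in \cite{BZ16}) yields a generalized polarized pair $(S_i,\Delta_{S_i}+H_{S_i})$ of dimension $<n$ with $K_{S_i}+\Delta_{S_i}+H_{S_i}\equiv_{\Rr}0$, whose boundary coefficients and nef-part multiplicities lie in a DCC set $\Ii'$ depending only on $\Ii$ and $n$, and in which, up to finitely many controlled modifications, the original numbers $a_{k,i}$ and $\mu_{j,i}$ reappear among the coefficients. Applying the inductive hypothesis in dimension $<n$ to $\Ii'$ confines these coefficients to a finite set; hence the same holds for the $a_{k,i}$ and $\mu_{j,i}$ when $i\gg0$, contradicting strict monotonicity.

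In the \textbf{klt case}, one has $-K_{X_i}\equiv_{\Rr}\Delta_i+H_i$. Following \cite{HMX14}, replace the coefficients $a_{k,i}$, $\mu_{j,i}$ by their limits to produce a generalized pair $(X_i,\Delta_i^{+}+H_i^{+})$ for which $K_{X_i}+\Delta_i^{+}+H_i^{+}$ is pseudo-effective, run a $(K_{X_i}+\Delta_i^{+}+H_i^{+})$-MMP, and combine effective birationality for generalized pairs of general type \cite{BZ16}, the canonical bundle formula (to pass fibrations down to lower dimension and feed the induction), and the boundedness of the relevant Fano-type varieties --- here Theorem \ref{thm: BAB} --- to confine the $a_{k,i}$ and $\mu_{j,i}$ to a finite set for $i\gg0$, again a contradiction. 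Combining the two cases completes the induction.

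I expect the main obstacle to lie in the non-klt case: one must set up a precise generalized adjunction formula and show that the coefficients of the resulting ``generalized different'' again form a DCC set, uniformly over the whole family, while faithfully tracking how $a_{k,i}$ and $\mu_{j,i}$ are transported to $S_i$ --- it is here that the Cartier hypothesis on the $H_{j,W}$ and the generalized lc condition are genuinely used. A second delicate point is the klt case, where, lacking an a priori $\epsilon$-lc bound on $(X_i,\Delta_i)$, one cannot apply Theorem \ref{thm: BAB} directly but must first descend to a bounded situation via the MMP and effective birationality before the limiting argument can be carried out.
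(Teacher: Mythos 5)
This statement is not proved in the paper: it is quoted from \cite{BZ16}, Theorem 1.6, and used as a black box in the proof of Theorem \ref{thm: OneDCC}. There is therefore no in-paper proof to compare against; your sketch should be measured against Birkar--Zhang's argument (and its ancestor for ordinary pairs in \cite{HMX14}).

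Your outline captures the overall shape of that argument --- induction on dimension, passage to $\Qq$-factorial generalized dlt models, a dichotomy on whether a proper generalized non-klt center exists, generalized adjunction in the non-klt case, and MMP together with effective birationality and boundedness in the klt case --- and the two ``obstacles'' you flag are indeed where the work lies. But as written the sketch leaves both open. In the non-klt case, the coefficients of the generalized different on $S_i$ do not ``reappear'' as the original $a_{k,i},\mu_{j,i}$; they are replaced by expressions of the specific shape dictated by the generalized adjunction formula, and one has to verify that these transformed coefficients lie in a DCC set determined only by $\Ii$ and $n$, and then argue indirectly, as in \cite{HMX14}, that accumulation of the transformed coefficients forces accumulation of the originals. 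In the klt case, naming the tools is not the same as giving the argument: the actual mechanism in \cite{HMX14} and \cite{BZ16} is to manufacture a model carrying a proper non-klt center (for instance by extracting a divisor of small log discrepancy, or by perturbing into a strictly generalized lc situation) so as to route back through the first case, and one must additionally check that the MMP steps interact correctly with the nef part, e.g.\ that the pushed-down $H$ is still the pushforward of a nef divisor from some birational model. Your plan is sound in spirit, but at present it is a plan rather than a proof.
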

\section{Fujita's spectrum conjecture and Fujita's log spectrum conjecture}
% for (lc) $\epsilon$-lc pairs
\begin{proof}[Proof of Theorem \ref{thm: OneDCCfinite}] By Lemma \ref{lem:qfactorial}, we can assume that $X$ is $\Qq$-factorial. Let $\tau=\tau(X,\Delta;H)$, then $K_X+\Delta+\tau H$ is pseudo-effective but not big. We can assume that $K_X$ is not pseudo-effective.

\medskip
Since $H$ is big and nef, there is an effective divisor $E$, and ample $\Qq$-divisors $A_{k}$ such that $H\sim_{\Qq} A_{k}+\frac{E}{k}$, for any $k\gg 1$. Let $N$ be a natural number such that $(X,\Delta+\tau\frac{E}{N})$ is $\frac{\epsilon}{2}$-lc, and $N'$ a natural numbers such that $N'A_{N}$ is a very ample divisor. Let $A'\in |N'A_{N}|$ be a general very ample divisor, and $H':=\frac{1}{N'}A'+\frac{E}{N}\sim_{\Qq} H$, then %$(X,\Delta+\sum_{i=1}^m\tau_iH'_i)$ is klt, too.
\[
(X,\Gamma)\coloneqq (X, \Delta+\tau H')
\]
is still $\frac{\epsilon}{2}$-lc

\medskip

According to \cite{BCHM10}, we may run a $(K_X+\Gamma)$-MMP, $\phi: X\dashrightarrow Y$, such that $\phi$ is a birational map, $K_Y+\phi_{*}\Gamma$ is semiample and $(Y, \phi_{*}\Gamma)$ is still $\frac{\epsilon}{2}$-lc. As $\phi$ is $(K_X+\Gamma)$-negative, $K_Y+\phi_{*}\Gamma$ is not big as well. %In fact, %suppose $p:V\to X$ and $q: V \to Y$ is a common resolution of $\phi$, we always have $p^*(K_X+\Gamma)\ge q^*(K_Y+ \phi_{*}\Gamma)$. Hence if $K_Y+\phi_{*}\Gamma$ is big, so is $K_X+\Gamma$, which contradicts to our assumption.

\medskip

 Now $K_Y+\phi_{*}\Gamma$ defines a contraction $f: Y\rightarrow Z$. Let $F$ be a general fiber of $f$, we have $\dim(F)>0$. Restricting to $F$, we get
\begin{equation}\label{eq: numerically trivial1}
K_{F}+\phi_{*}\Gamma|_{F}= K_F +\phi_{*}\Delta|_{F}+\tau H_{Y}|_F\equiv_{\Rr}0,
\end{equation} where $H_{Y}$ is the strict transform of $H'$ on $Y$. We note that $H_{Y}$ is big and $H_{Y}|_F$ is also big since it is the restriction of a big divisor on a general fiber.

\medskip

Since $K_F +\phi_{*}\Gamma|_{F}$ is $\frac{\epsilon}{2}$-lc, according to Theorem \ref{thm: BAB}, $F$ belongs to a bounded family. We may find a very ample Cartier divisor $M_{F}$ on $F$, so that $-K_F\cdot M_{F}^{\dim F-1}$ is bounded. Besides, as $H' \sim_\Qq H$, we have $\phi_*(H') \equiv_{\Qq} \phi_*(H)$, and the intersection number $d:=H_{Y}|_F \cdot M_{F}^{\dim F-1}=\phi_*(H)|_F \cdot M_{F}^{\dim F-1}$ is a positive integer. Let $\phi_{*}\Delta|_F=\sum_j a_j\Delta_{F,j}$, where $a_j\in \Ii$, and $\Delta_{F,j}$ is a Weil divisor. By intersecting \eqref{eq: numerically trivial1} with $M_{F}^{\dim F-1}$, we obtain an equation for $\tau$,
\begin{equation}\label{eq: relation of tau1}
\tau d+\sum_j a_j b_j=c,
\end{equation}
where $c = - K_F \cdot M_{F}^{\dim F-1}$ is a nonnegative integral with only finite possibilities, $b_j = \Delta_{F,j} \cdot M_{F}^{\dim F-1}$ are nonnegative integers.

\medskip

First, we prove Theorem \ref{thm: OneDCCfinite} (1). By assumption $\tau\ge \delta>0$ and $\Ii$ is finite, $d,b_j$ are bounded above and thus only have finite possibilities. Hence there are only finite possibilities for the $\tau$ in equation \eqref{eq: relation of tau1}.

\medskip

Next, we prove Theorem \ref{thm: OneDCCfinite} (2). Recall that $a_j \in \Ii$ which is a DCC set, and $b_j \in \Nn$, then the finite summations $\sum_j a_j b_j \in \sum \Ii$ still form a DCC set. Now
\begin{equation}\label{eq:relationoftau2}
  \tau=\frac{1}{d}(c-\sum_j a_j b_j).
\end{equation}
The right hand side of the equation (\ref{eq:relationoftau2}) belongs to an ACC set, and $\Ttt_{n,\epsilon}(\Ii)$ satisfies the ACC.
\end{proof}

\begin{proof}[Proof of Corollary \ref{cor:LTT}]
Replacing Theorem \ref{thm:fujita} by Theorem \ref{thm: OneDCCfinite} in the proof of \cite{HJ16}, we get Corollary \ref{cor:LTT}.
\end{proof}

In the following example, we see that Theorem \ref{thm: OneDCCfinite} (1) is no longer true even for an ample Cartier divisor if we relax the singularities from $\epsilon$-lc to klt, and Theorem \ref{thm: OneDCCfinite} (2) is no longer true even for an ample $\Qq$-Cartier Weil divisor if we relax the singularities from $\epsilon$-lc to klt. We thank Chen Jiang for providing us this example.

\medskip

\begin{example}\label{ex:counter}
Let $n$ be a natural number and $\Pp(1,1,n)$ be the weighted projective space. It is a toric variety with the lattice $N$ spanned by $\{(1,0), (\frac 1 n, \frac 1 n)\}$. Let $v_1=(1,0), v_2=(0,1)$ and $v_3=(-\frac 1 n, -\frac 1 n)$, then the fan is generated by maximal cones $\sigma_1= \langle v_1,v_2\rangle, \sigma_2= \langle v_2,v_3\rangle$ and $\sigma_3= \langle v_1,v_3\rangle$ (c.f. \cite{Ful93} P.35). Let $D_i$ be the toric invariant divisor corresponding to $v_i$. By choosing $(n,0)$ and $(0,n)$ in the dual lattice $N^\vee$, we see that $nD_1 \sim D_3 $ and $nD_2 \sim D_3$. Thus
\[
-K_{\Pp(1,1,n)} \sim_\Qq D_1+D_2+D_3 \sim_\Qq ( 1+\frac 2 n)D_3.
\] Moreover $D_3$ is an ample Cartier divisor as it is associated to the lattice $(0,0)\in N^\vee$ for $\sigma_1$, $(n,0)\in N^\vee$ for $\sigma_2$ and $(0,n)\in N^\vee$ for $\sigma_3$.  As $\Pp(1,1,n)$ has Picard number one, $( 1+\frac 2 n)$ is the pseudo-effective threshold of $D_3$. This gives a set of varieties whose pseudo-effective spectrum is infinite away from $0$. Notice that the minimal log discrepancy of $\Pp(1,1,n)$ is $\frac 2 n$ (c.f. \cite{Bor97}), hence it is a counterexample if we replace $\epsilon$-lc by klt in Theorem \ref{thm: OneDCCfinite} (1).

\medskip

Besides, $D_1$ is an integral divisor and
\[
-K_{\Pp(1,1,n)} \sim_\Qq D_1+D_2+D_3 \sim_\Qq (n+2)D_1.
\]
Hence the pseudo-effective threshold of $D_1$ is $n+2$. Thus we get a family whose pseudo-effective thresholds (with respect to $D_1$) are strictly increasing. This gives a counterexample if we replace $\epsilon$-lc by klt in Theorem \ref{thm: OneDCCfinite} (2).
\end{example}

%\subsection{Proof of Theorem \ref{thm: OneDCC}.}
\begin{proof}[Proof of Theorem \ref{thm: OneDCC}]
Let $\tau=\tau(X,\Delta,H)$. We first prove the theorem for the case that $(X,\Delta)$ is $\Qq$-factorial klt.

By assumption, $H$ is big and nef, as in the proof of Theorem \ref{thm: OneDCCfinite}, we can run a $(K_X+\Delta+\tau H)$-MMP, $\phi: X\dashrightarrow Y$, and reach a minimal model $(Y,\phi_{*}(\Delta+\tau H))$, on which $K_Y+\phi_{*}(\Delta+\tau H)$ is semiample defining a contraction $Y\to Z$.

Taking a common log resolution $p: W\rightarrow (X,\Delta)$, $q: W\rightarrow (Y,\phi_{*}\Delta)$, let $H_W=p^{*}H$. Then
$(X,\Delta+\tau H)$ is generalized lc, as $(X,\Delta)$ is lc. Since $p^{*}(K_X+\Delta+\tau H)\ge q^{*}(K_Y+\phi_{*}(\Delta+\tau H))$, $(Y,\phi_{*}(\Delta+\tau H))$ is also generalized lc. Let $F$ be a general fiber of $W\to Z$, and $T$ be the corresponding fiber of $Y\to Z$. Again, we have $\dim(T)>0$. By restricting to the general fiber $T$, $(T,\phi_{*}(\Delta+\tau H)|_{T})$ is generalized lc, and $K_T+\phi_{*}(\Delta+\tau H)|_{T}\equiv0$. Since $H_W$ is big, $H_W|_F$ is not numerically trivial, and there exists some component $q^{*}(H_j)|_F$ of $H_W|_F$ which is not numerically trivial. If $\{\tau\}$ forms a strictly increasing sequence, then $\{\mu_j\tau\}$ belongs to a DCC set. According to Theorem \ref{thm: genACC}, $\{\mu_j\tau\}$ belongs to a finite set, a contradiction. Therefore, $\{\tau\}$ belongs to an ACC set.

\medskip

 For the general case, according to Lemma \ref{lem:qfactorial}, we may assume that $(X,\Delta)$ is $\Qq$-factorial dlt. If the statement were not true, then there exists a sequence of lc pairs $(X^{(i)},\Delta^{(i)})$, and a big $\Rr$-Cartier $H^{(i)}$ on $X^{(i)}$ satisfying the assumption of Theorem \ref{thm: OneDCC}, but $\tau_i:=\tau(X^{(i)},\Delta^{(i)}, H^{(i)})$ is strictly increasing. For any $1\ge \epsilon\ge 0$, let $\tau_{i,\epsilon}:=\tau(X^{(i)},(1-\epsilon)\Delta^{(i)};H^{(i)})$. It is clear that $\tau_{i,\epsilon}\ge \tau_i$, and there exists a decreasing sequence, $\epsilon_i\to 0$, such that $\tau_{i+1}>\tau_{i,\epsilon_i}\ge \tau_i$. Let $\Jj=\{1-\epsilon_i\}$. Now $(X^{(i)},(1-\epsilon_i)\Delta^{(i)})$ is $\Qq$-factorial klt, the coefficients of $(1-\epsilon_i)\Delta^{(i)}$ belong to $\Ii\Jj$, which is a DCC set. However, the sequence $\{\tau_{i,\epsilon_i}\}_{i\in\Nn}$ is strictly increasing. This contradicts to the above $\Qq$-factorial case.
\end{proof}

Inspired by a private communication with Di Cerbo, we can prove a slightly stronger version of Theorem \ref{thm: OneDCC} by using an effective birationality result for generalized polarized pairs of general type established in \cite{BZ16}.

\begin{theorem}\label{thm: OneDCC2}
Let $n$ be a natural number and $\Ii$ a DCC set of nonnegative real numbers. Let $\Dd_{n}(\Ii)$ be the set of pseudo-effective threshold $\tau(X,\Delta;M)$ which satisfies the following conditions.
\begin{enumerate}
\item $X$ is a normal projective variety of dimension $n$,
\item $(X, \Delta)$ is lc and the coefficients of $\Delta$ are in $\Ii$,
\item $M=\sum \mu_iM_i$, where $M_i$ is a nef Cartier divisor for each $i$, $\mu_i\in\Ii$, and
\item $K_X+\Delta+M$ is big
\end{enumerate}
Then $\Dd_{n}(\Ii)$ satisfies the ACC.
\end{theorem}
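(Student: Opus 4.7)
The plan is to follow the structure of the proof of Theorem \ref{thm: OneDCC} as closely as possible, weakening the hypothesis ``$M$ big'' to ``$K_X+\Delta+M$ big'' and patching the two places where bigness of $M$ itself was used. First I would reduce to the $\Qq$-factorial klt case: Lemma \ref{lem:qfactorial} handles the passage to a $\Qq$-factorial dlt model, and the perturbation $\Delta\rightsquigarrow(1-\epsilon_i)\Delta$ used at the end of the proof of Theorem \ref{thm: OneDCC} handles the passage from dlt to klt, after noting that bigness of $K_X+(1-\epsilon)\Delta+M$ is preserved for $\epsilon$ small since the big cone is open.

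Writing $\tau=\tau(X,\Delta;M)$ and regarding $(X,\Delta+\tau M)$ as a generalized polarized pair with boundary $\Delta$ and nef part $\tau M$, I would run a generalized $(K_X+\Delta+\tau M)$-MMP $\phi\colon X\dashrightarrow Y$ with scaling of $(1-\tau)M$. Because the sum $(K_X+\Delta+\tau M)+(1-\tau)M=K_X+\Delta+M$ is big, the machinery for generalized polarized pairs of general type developed in \cite{BZ16}---in particular the effective birationality theorem---allows this MMP to terminate with a good minimal model on which $K_Y+\phi_*(\Delta+\tau M)$ is semiample. By the definition of $\tau$ this semiample divisor is not big, so it induces a contraction $f\colon Y\to Z$ with $\dim Z<\dim Y$.

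The crucial new step is the analysis on a general fibre $T$ of $f$, where $K_T+\phi_*\Delta|_T+\tau\phi_*M|_T\equiv 0$. In Theorem \ref{thm: OneDCC} bigness of $H$ forced $H|_T$ to be nontrivial; here I would instead exploit that $K_Y+\phi_*\Delta+\phi_*M$ is big on $Y$ (bigness is preserved under birational pushforward), together with the relative identity
\[
K_Y+\phi_*\Delta+\phi_*M\equiv_{/Z}(1-\tau)\phi_*M
\]
which follows from $K_Y+\phi_*(\Delta+\tau M)\equiv_{/Z}0$. If $\phi_*M|_T$ were numerically trivial then $K_Y+\phi_*\Delta+\phi_*M$ would be numerically pulled back from $Z$, contradicting bigness together with $\dim Z<\dim Y$. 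Hence some component $\phi_*M_j|_T\not\equiv 0$, and the corresponding $\mu_j$ is nonzero. With this nontriviality secured, Global ACC for generalized polarized pairs (Theorem \ref{thm: genACC}) applied to $(T,\phi_*\Delta|_T+\tau\phi_*M|_T)$ forces its coefficients into a fixed finite subset of $\Ii\cup\{\tau\}\cdot\Ii$, which is DCC because a product of DCC subsets of $[0,1]$ is DCC. Arguing by contradiction from a strictly increasing sequence $\tau_i$, the products $\tau_i\mu_{j_i,i}$ (with $j_i$ chosen so that $\phi_*M_{j_i,i}|_{T_i}\not\equiv 0$ and hence $\mu_{j_i,i}\neq 0$) lie in a finite set, and after passing to a subsequence with $\tau_i\mu_{j_i,i}=c$ constant the equation $\mu_{j_i,i}=c/\tau_i$ produces a strictly decreasing sequence in the DCC set $\Ii$, a contradiction.

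The main obstacle I expect is the termination of the generalized MMP in the first step: the boundary $\Delta+\tau M$ need not be big, so \cite{BCHM10} does not apply directly, and this is precisely where the effective birationality result for generalized polarized pairs of general type from \cite{BZ16} is invoked to push the generalized MMP with scaling of $(1-\tau)M$ to completion under the weaker hypothesis that only $K_X+\Delta+M$ is big. Once this MMP is in hand, the remaining fibre and Global ACC arguments are structurally identical to those in the proof of Theorem \ref{thm: OneDCC}.
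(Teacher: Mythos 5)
Your overall strategy---rerunning the proof of Theorem \ref{thm: OneDCC} and patching the two places where bigness of $M$ was used---founders at the step you yourself flag as the main obstacle, and the patch you propose there does not work. To produce the fibration $f\colon Y\to Z$ you need a good minimal model for $(X,\Delta+\tau M)$: termination of some $(K_X+\Delta+\tau M)$-MMP together with semiampleness of the pushforward. In the proof of Theorem \ref{thm: OneDCC} this is supplied by \cite{BCHM10} precisely because $H$ is big, so that after replacing $H$ by $H'\sim_{\Qq}H$ the boundary $\Delta+\tau H'$ contains an ample divisor. Under the weaker hypothesis that only $K_X+\Delta+M$ is big, the boundary $\Delta+\tau M$ need not be big, and neither \cite{BCHM10} nor the generalized-pair MMP results of \cite{BZ16} give termination or semiampleness in that situation; this is essentially abundance-type territory. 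In particular, the effective birationality theorem of \cite{BZ16} is a statement about a fixed multiple of $K_X+\Delta+M$ defining a birational map; it says nothing about running or terminating an MMP with scaling of $(1-\tau)M$, so invoking it to ``push the MMP to completion'' is a non sequitur. (The rest of your argument---bigness of the restriction to a general fibre forcing some $\phi_*M_j|_T\not\equiv 0$ since $\tau<1$, and the Global ACC endgame---would be fine modulo this step, and matches the mechanism of Theorem \ref{thm: OneDCC}.)

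The paper avoids the MMP entirely here. It argues by contradiction with $\alpha_i=\tau(X^{(i)},\Delta^{(i)};M^{(i)})\nearrow\alpha\le 1$, observes that $K_{X^{(i)}}+\Delta^{(i)}+\alpha M^{(i)}$ is big with nef-part coefficients in the DCC set $\Ii\cup\alpha\Ii$, and applies \cite{BZ16} Theorem 8.2 to obtain a uniform $m$ such that $\lfloor m(K_X+\Delta)\rfloor+\sum\lfloor m\alpha\mu_j\rfloor M_j$ defines a birational map. Then \cite{HMX13} Lemma 2.3.4 shows $K_X+\Delta+(2n+1)m(K_X+\Delta+\alpha M)$ is big, which after rescaling says $K_X+\Delta+\frac{(2n+1)m\alpha}{(2n+1)m+1}M$ is big, whence $\alpha_i\le\frac{(2n+1)m}{(2n+1)m+1}\alpha<\alpha$ for every $i$, contradicting $\alpha_i\to\alpha$. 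This is where effective birationality is genuinely used---to produce a uniform gap below the limit threshold---not to run an MMP. If you want to keep your fibration approach, you must first establish the required good minimal model, which is exactly what the hypothesis ``$M$ big'' was buying in Theorem \ref{thm: OneDCC}.
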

\begin{proof}
 Otherwise, there exists a sequence $\{(X^{(i)},\Delta^{(i)},M^{(i)})\}$, such that $\alpha_i=\tau (X^{(i)},\Delta^{(i)};M^{(i)})$ is strictly increasing and $\lim_{i\to +\infty}\alpha_i= \alpha\le 1$. Then, $\tau (X^{(i)},\Delta^{(i)};\alpha M^{(i)})=\frac{\alpha_i}{\alpha }\to 1$, and $K_X^{(i)}+\Delta^{(i)}+\alpha M^{(i)}$ is big. Since the coefficients of $\alpha M^{i}$ belong to the DCC set $\Ii\cup\alpha \Ii$, by Theorem 8.2 in \cite{BZ16}, there exists a natural number $m$ depending only on $n$ and $\Ii\cup\alpha \Ii$, such that the linear system $|\lfloor m(K_X+\Delta)\rfloor+\sum \lfloor m\alpha\mu_j\rfloor M_j|$ defines a birational map. By Lemma 2.3.4 in \cite{HMX13}, $K_X+\Delta+(2n+1)(m(K_X+\Delta+\alpha M))$ is big. Since
 \begin{align*}
&K_X+\Delta+(2n+1)(m(K_X+\Delta+\alpha M))\\
\sim_{\Rr} &((2n+1)m+1)(K_X+\Delta+\frac{(2n+1)m\alpha }{(2n+1)m+1}M),
\end{align*}

  we have $\alpha_i=\tau(X^{(i)},\Delta^{(i)};M^{(i)})\le \frac{(2n+1)m}{(2n+1)m+1}\alpha$. This implies that
  $$\lim_{i\to +\infty}\alpha_i \le \frac{(2n+1)m}{(2n+1)m+1}\alpha<\alpha,$$
  a contradiction.
  \end{proof}
  \begin{remark}
  In Theorem \ref{thm: OneDCC}, we do not require that $M$ to be big, and Theorem \ref{thm: OneDCC2} implies Theorem \ref{thm: OneDCC} . In fact, let $\Ii$ be a DCC set, $(X,\Delta)$ be a projective $\Qq$-factorial dlt pair of dimension $n$, $H=\sum \mu_i H_i$ be a big and nef $\Rr$-Cartier divisor. Set $\delta=\min \{\Ii^{>0}\}$, where $\Delta,\mu_j\in \Ii$. One can show that $K_X+\Delta+\frac{(2n+1)}{\delta}H$ is also big (see, for example, \cite{Bir16a} Lemma 2.30). Let $M=\frac{(2n+1)}{\delta}H$, Theorem \ref{thm: OneDCC} follows from Lemma \ref{lem:qfactorial} and Theorem \ref{thm: OneDCC2}.
  \end{remark}

%\begin{proof}[Another proof of Theorem \ref{thm: OneDCC}]\end{proof}
\begin{comment}
\begin{lemma}[\cite{BZ16} Theorem 8.2]\label{thm: effbir}
  Let $n$ be a natural number and $\Ii$ a DCC set of nonnegative real numbers. There is a natural number $m$ depending only on $\Ii,n$ such that if:
  \begin{enumerate}
    \item $X$ is a normal projective variety of dimension $n$,
\item $(X, \Delta)$ is lc and the coefficients of $\Delta$ are in $\Ii$,
\item $M=\sum \mu_iM_i$, where $M_i$ is a nef Cartier divisor for each $i$, $\mu_i\in\Ii$, and
\item $K_X+B+M$ is big,
  \end{enumerate}
  then the linear system $|\lfloor m(K_X+\Delta)\rfloor+\sum \lfloor m\mu_j\rfloor M_j|$ defines a birational map.
\end{lemma}

\begin{lemma}[\cite{HMX13} Lemma 2.3.4]\label{lem: birbig}
  Let $X$ be a normal projective variety of dimension $n$, and let $D$ be a big $\Qq$-Cartier divisor on $X$. If $D$ defines a birational map, then $K_X+(2n+1)D$ is big.
\end{lemma}
\end{comment}
\section{The Finiteness and the DCC property for pseudo-effective polytopes}\label{sec: proof}
In this section, we prove Theorem \ref{thm: fujita} and Theorem \ref{thm: DCC}, which generalize Theorem \ref{thm: OneDCCfinite} from a single divisor to multiple divisors. The proofs are similar.
\subsection{Pseudo-effective polytopes}

Let $X$ be a normal projective variety, and $V$ be a finite dimensional affine subspace of the real vector space $\mathrm{WDiv}_{\Rr}(X)$ of Weil divisors on $X$. Fix an $\Rr$-divisor $A\ge0$ and define (see \cite{BCHM10} Definition 1.1.4)
\[
\Ee_{A}(V)=\{\Delta=A+B \mid B\in V,B\ge0, K_X+\Delta \text{ is lc and pseudo-effective}\}.
\]

Notice that $\Ee_{A}(V)$ is a compact set by the lc requirement. Under this notation, we have the following result.

\begin{theorem}[\cite{BCHM10} Corollary 1.1.5]\label{thm: BCHM polytope}
Let $X$ be a normal projective variety, $V$ be a finite dimensional affine subspace of $\mathrm{WDiv}_{\Rr}(X)$ which is defined over rationals. Suppose there is a divisor $\Delta_0 \in V$ such that $K_X+\Delta_0$ is klt. Let $A$ be a general ample $\Qq$-divisor, which has no components in common with any element of $V$. Then $\Ee_{A}(V)$ is a rational polytope.
\end{theorem}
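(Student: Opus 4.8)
The plan is to reproduce the strategy behind [BCHM10]: first cut out the ambient log canonical region by rational linear inequalities, then use the existence and finiteness of minimal models to cut out the pseudo-effective locus inside it. Throughout, write $\Ll_A(V)$ for the set of $\Delta=A+B$ with $B\in V$, $B\ge0$ and $(X,\Delta)$ log canonical, so that $\Ee_A(V)\subseteq\Ll_A(V)$. First I would show that $\Ll_A(V)$ is a rational polytope: fix a log resolution $f\colon Y\to X$ of the union of the support of $A$ with the supports of all divisors appearing in $V$, and write $K_Y+\Gamma_\Delta=f^{*}(K_X+\Delta)$. The coefficient in $\Gamma_\Delta$ of each prime divisor on $Y$ is an affine-linear function, with rational coefficients, of the coefficients of $B$, and the log canonical condition is that all of these are $\le1$. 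Together with the constraints $B\ge0$ and $B\in V$ (the latter rational since $V$ is defined over $\Qq$), this is a finite system of rational affine inequalities, so $\Ll_A(V)$ is a rational polytope.

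Next, for every $\Delta\in\Ee_A(V)$ the pair $(X,\Delta)$ is log canonical with big boundary (since $\Delta\ge A$ is ample), and by the main theorems of [BCHM10] — existence of flips together with termination of the minimal model program with scaling of an ample divisor, applied to the klt pairs in $\Ll_A(V)$ near $(X,\Delta)$, which exist thanks to the hypothesis on $\Delta_0$ and the generality of $A$ — the pair $(X,\Delta)$ admits a log terminal model $\phi_\Delta\colon X\dashrightarrow X_\Delta$. The key step is then \emph{finiteness of models}: there is a finite collection of birational contractions $\phi_1,\dots,\phi_k\colon X\dashrightarrow X_i$ such that for every $\Delta\in\Ee_A(V)$ some $\phi_i$ is a log terminal model of $(X,\Delta)$. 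This is proved by a simultaneous induction on $\dim X$ and on the geometry of $\Ll_A(V)$: running the MMP with scaling starting from $X$, one bounds the number of possible outcomes near a given $\Delta$ using the cone theorem to control which extremal rays can be contracted, and one reduces the dimension by restricting, via adjunction, either to a general member of $|A|$ or to a divisorial component of the boundary.

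Granting finiteness of models, fix an index $i$ and set $\Ee_{A,i}(V)=\{\Delta\in\Ll_A(V)\mid \phi_i \text{ is a log terminal model of }(X,\Delta)\}$. On a common resolution of $X$ and $X_i$, the requirement that $\phi_i$ be $(K_X+\Delta)$-nonpositive is a finite set of rational linear inequalities in the coefficients of $\Delta$; and the requirement that $K_{X_i}+\phi_{i*}\Delta$ be nef on $X_i$ is, since $\phi_{i*}\Delta$ is a klt boundary with big part, controlled near the classes coming from the $\Delta$ for which $\phi_i$ is already a model by finitely many extremal curve classes (cone theorem), hence again a finite set of rational linear inequalities. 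Thus each $\Ee_{A,i}(V)$ is a rational polytope, and $\Ee_A(V)=\bigcup_{i=1}^{k}\Ee_{A,i}(V)$ is a finite union of rational polytopes. Since pseudo-effectivity and the log canonical property are both preserved under convex combinations, and the conditions $B\ge0$, $B\in V$ are linear, the set $\Ee_A(V)$ is convex; a convex finite union of rational polytopes is itself a rational polytope, which completes the proof.

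The main obstacle is the finiteness of models step. Establishing it requires the full inductive apparatus of [BCHM10] — existence of pl-flips, special termination, and the delicate bookkeeping among the several induction parameters — and is precisely where the difficulty of the minimal model program is concentrated; everything else is a formal consequence of the rationality of $\Ll_A(V)$ and the cone theorem.
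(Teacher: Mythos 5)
The paper does not give a proof of this theorem: it is imported verbatim from \cite{BCHM10} (Corollary 1.1.5) and used as a black box, so there is nothing in the paper itself to compare your argument against. That said, your sketch is a faithful reconstruction of the strategy in \cite{BCHM10}: (a) show $\Ll_A(V)$ is a rational polytope by passing to a log resolution and observing that the discrepancies are affine-rational in the coefficients of $B$; (b) invoke finiteness of log terminal models for pairs in $\Ll_A(V)$; (c) for each model $\phi_i$, cut out the sublocus $\Ee_{A,i}(V)$ by the rational affine conditions that $\phi_i$ be $(K_X+\Delta)$-nonpositive and that the pushforward be nef (the latter is locally rational polyhedral near a klt class with big boundary by the cone theorem); (d) conclude by convexity of $\Ee_A(V)$ and the elementary fact that a convex finite union of rational polytopes is a rational polytope. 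The structure is sound and you correctly flag that all the depth is concentrated in step (b), which your proposal only gestures at --- the ``bounding outcomes via the cone theorem'' sentence compresses the entire inductive machinery of pl-flips, special termination, and lifting sections from hyperplane sections that BCHM develop over several chapters. Two small wording points: $\Delta\ge A$ does not make $\Delta$ ample, only big, which is what you actually use; and the local (not global) polyhedrality of the nef cone near a klt-with-big-boundary class is what makes the nef condition cut out a polytope, which you say but could state more explicitly.
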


We can deduce that PE-polytopes are indeed polytopes under suitable assumptions from Theorem \ref{thm: BCHM polytope}. Notice that there are no lc restrictions for PE-polytopes. See Figure \ref{fig: PE-polytope} for a PE-polytope of two divisors.

\begin{figure}[h]
      \centering
\begin{tikzpicture}

\draw [help lines,<->] (0,2.5)--(0,0)--(2.5,0);
\draw [thick] (0,2.3)--(0,1.8)--(0.3,1.1)--(1,0.3)--(1.8,0)--(2.3,0);

\fill[gray!50, nearly transparent] (2.3,2.3)--(0,2.3)--(0,1.8)--(0.3,1.1)--(1,0.3)--(1.8,0)--(2.3,0) -- cycle;

\node[right] at (2.5,0) {\footnotesize$t_1$};
\node[above] at (0,2.5) {\footnotesize$t_2$};
\node at (1.2,2){\tiny$P(X, \Delta; D_1, D_2)$};

\end{tikzpicture}
\caption{A PE-polytope of two divisors}
\label{fig: PE-polytope}
\end{figure}
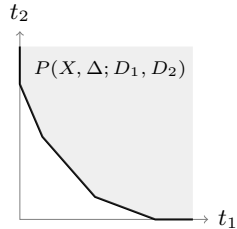

\begin{proposition}\label{prop: polytope}
Let $(X, \Delta)$ be a klt pair and $H_1, \ldots, H_m$ be big and nef $\Qq$-Weil divisors. Then
 $P(X,\Delta; H_1,\ldots,H_m)$ is an unbounded polytope.
\end{proposition}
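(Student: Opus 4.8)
The plan is to deduce everything from Theorem \ref{thm: BCHM polytope}. First I would reduce to the $\Qq$-factorial case: by Lemma \ref{lem:qfactorial} (applied with a dlt, hence small since klt, modification $f\colon X'\to X$) we have $P(X,\Delta;H_1,\dots,H_m)=P(X',\Delta';f^*H_1,\dots,f^*H_m)$, and the pullbacks $f^*H_i$ are still big and nef $\Qq$-Cartier (on a $\Qq$-factorial variety Weil $=$ $\Qq$-Cartier). So assume $X$ is $\Qq$-factorial klt.

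Next I would set up the affine space. Since each $H_i$ is big, write $H_i\sim_\Qq A_i+E_i$ with $A_i$ an ample $\Qq$-divisor and $E_i\ge 0$; after scaling we can also absorb a general ample $\Qq$-divisor $A\ge 0$ with no common components with $\Delta$ or the $E_i$, replacing $H_i$ by the $\Qq$-linearly equivalent divisor $H_i'=A+(H_i-A)$, so that $H_i'=A+(\text{effective})$ — more precisely, choose $A$ small enough and $\Qq$-divisors $H_i'\sim_\Qq H_i$ with $H_i'\ge A$ and $K_X+\Delta$ klt still holds. Then let $V\subseteq \mathrm{WDiv}_\Rr(X)$ be the affine span of $\Delta+\sum t_i H_i'$ for $(t_1,\dots,t_m)$ ranging over a neighborhood; concretely take $V=\Delta-A+\sum_{i} \Rr_{\ge 0}(H_i'-A)$ shifted appropriately. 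The point is that for $\mathbf t=(t_1,\dots,t_m)$ in the region where $\sum t_i\ge 1$, the divisor $\Delta+\sum t_i H_i'-A$ lies in (a translate of) $V$ and $\Delta+\sum t_iH_i'=A+(\text{that divisor})$; also $K_X+\Delta+\sum t_iH_i'$ is automatically lc (indeed klt) because $\Delta$ is klt and we are adding an effective $\Qq$-Cartier divisor with small enough coefficients — wait, that is false for large $t_i$, so instead one argues: pseudo-effectivity is what matters, and $\Ee_A(V)$ records exactly the locus where $K_X+(A+B)$ is lc \emph{and} pseudo-effective. Since bigness/nefness of $H_i$ forces $K_X+\Delta+\sum t_iH_i'$ to become big (hence the lc condition, which only cuts out a compact set, is not the binding constraint for large $\mathbf t$), the pseudo-effective region is, near its boundary, cut out by finitely many linear inequalities coming from the rational polytope $\Ee_A(V)$ of Theorem \ref{thm: BCHM polytope}.

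The cleanest route, which I would actually follow: define $\Phi=\{(t_1,\dots,t_m,s)\in\Rr_{\ge0}^m\times\Rr_{\ge 0}\mid K_X+\Delta+\sum t_iH_i'+sA \text{ is pseudo-effective and lc}\}$; using Theorem \ref{thm: BCHM polytope} with the affine space $V$ spanned by $\Delta,H_1',\dots,H_m',A$ (translated so $A$ is the fixed ample part), $\Phi$ is a rational polytope (or an intersection of $\Ee_A(V)$ with the relevant orthant). The PE-polytope $P(X,\Delta;H_1,\dots,H_m)$ is the projection of the slice $\{s=0\}$ — but more usefully, since $A$ can be taken arbitrarily small and $H_i'\sim_\Qq H_i$, one shows $P(X,\Delta;H_1,\dots,H_m)=\bigcup_{s\ge 0}\{\mathbf t: (\mathbf t,s)\in\Phi\}$ restricted suitably, hence is itself a rational polytope intersected with $\Rr_{\ge 0}^m$; unboundedness is immediate since $H_i$ big $\Rightarrow$ $K_X+\Delta+\lambda\sum H_i$ is big for $\lambda\gg0$, so the whole shifted orthant $\mathbf t+\Rr_{\ge0}^m$ lies in $P$ for $\mathbf t$ large. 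The main obstacle is the bookkeeping in this last paragraph: Theorem \ref{thm: BCHM polytope} is stated for a \emph{fixed} ample $A$ and an lc region, whereas our $P$ has no lc constraint and involves the nef (not ample) divisors $H_i$; I expect the real work is to verify carefully that the pseudo-effective boundary of $P$ coincides with a face structure pulled back from the rational polytope $\Ee_A(V)$ for a single well-chosen small $A$ — i.e. that shrinking $A$ does not change the boundary once $A$ is small enough, which follows from compactness of $\Ee_A(V)$ and the fact that finitely many linear functionals are involved.
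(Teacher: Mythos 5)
Your proposal correctly identifies the main tool (Theorem \ref{thm: BCHM polytope}) and even names the two obstacles --- the fixed general ample divisor $A$ required by that theorem, and the lc constraint built into $\Ee_A(V)$ --- but it does not actually overcome either of them, and the paper's proof turns on a trick you are missing. Concerning the ample part: your plan is to adjoin an auxiliary variable $s$ for $sA$ and then pass to $s=0$ (or let $A\to 0$). This does not work as stated, because the slice $s=0$ simply does not lie in $\Ee_A(V)$ (the definition forces the summand $A$ to be present with coefficient one), and your fallback --- ``shrinking $A$ does not change the boundary once $A$ is small enough, since finitely many linear functionals are involved'' --- presupposes the polyhedrality you are trying to prove. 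The paper instead \emph{extracts} the ample divisor from the $H_i$ themselves: since $K_X$ is not pseudo-effective, there is $a>0$ with $K_X+a(H_1+\cdots+H_m)$ not pseudo-effective, so every boundary point of $P$ has some coordinate $t_k\ge a$; writing $H_k'=\frac{1}{N_k}A_k'+\frac{E_k}{N}$ with $A_k'$ very ample and peeling off $aH_k'$, one lands in $\Ee_{M_k}(V_k)$ for a \emph{fixed} ample $M_k\in|\frac{a}{N_k}A_k'|$ and a fixed affine space $V_k$ (with $\frac{a}{N}E_k$ absorbed into its base point). This requires $m$ separate applications of Theorem \ref{thm: BCHM polytope}, one for each coordinate direction, and one concludes that $P\cap[0,\tau]^m$ is the union of the resulting rational polytopes, hence a polytope by convexity.

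Concerning the lc constraint: your remark that for large $\mathbf t$ the divisor is big, ``hence the lc condition is not the binding constraint,'' does not resolve anything --- a point where $(X,\Delta+\sum t_iH_i')$ fails to be lc is excluded from $\Ee_A(V)$ outright, whether or not the divisor is pseudo-effective, so $\Ee_A(V)$ could a priori fail to see part of the boundary of $P$. The paper's fix has two steps you need: first truncate to the box $[0,\tau]^m$ (legitimate because bigness of the $H_i$ forces all of $\Rr_{\ge0}^m\setminus[0,\tau]^m$ into $P$, so only the box matters); second, choose the representatives $H_i'\sim_{\Qq}H_i$ with \emph{general} very ample parts so that $(X,\Delta+\tau\sum_i H_i')$ is klt, whence the lc condition is automatic everywhere in the box and drops out of $\Ee_{M_k}(V_k)$. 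Your unboundedness argument and the reduction to the case where $K_X$ is not pseudo-effective are fine, but as written the core of the proof has a genuine gap at both of the points above.
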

\begin{proof}
By definition (see \eqref{eq: PE-polytope}),  $P(X,\Delta; H_1,\ldots,H_m)$ is convex. If $K_X$ is pseudo-effective, then $P(X,\Delta; H_1,\ldots,H_m) = \Rr_{\geq 0}^m$, hence we can assume that $K_X$ is not pseudo-effective. Then there exists a rational number $a>0$ such that $K_X+ a(H_1+ \cdots+H_m)$ is also not pseudo-effective. %(because pseudo-effective cone is closed).

\medskip

Because $H_i$ are big divisors, there exists constant $\tau>0$, such that $K_X + \Delta + \sum_{i=1}^m t_i H_i$ is pseudo-effective whenever $t_i \geq \tau$. In other words, $\Rr^m_{\geq 0} \backslash [0, \tau]^m$ is contained in $P(X,\Delta; H_1,\ldots,H_m)$. Thus, it suffices to show that
$P(X,\Delta; H_1,\ldots,H_m)\cap [0, \tau]^m$ is a polytope.

\medskip

Recall that a divisor is pseudo-effective if and only if the divisor which is numerical to it is also pseudo-effective, thus,
\begin{equation}\label{eq: pseudo-effective is num}
P(X,\Delta; H_1,\ldots,H_m) = P(X,\Delta; H'_1,\ldots, H'_m)
\end{equation} for any $H_i \equiv_\Qq H_i'$. Since $H_i$ is big and nef, there is an effective divisors $E_i$, and ample $\Qq$-divisor $A_{i_k}$ such that $H_i\equiv A_{i_k}+\frac{E_i}{k}$, for any $k\gg 1$. Let $N$ be a natural number such that $(X,\Delta+\tau\sum_{i=1}^m\frac{E_i}{N})$ is klt, and $N_i>\tau$ be a natural number such that $N_iA_{i_N}$ is a very ample divisor. Let $A_i'\in |N_iA_{i_N}|$ be a general very ample divisor, and $H_i'=\frac{1}{N_i}A_i'+\frac{E_i}{N}$, $(X,\Delta+\tau\sum_{i=1}^m H'_i)$ is also klt. Now, we only need to show that $P(X,\Delta; H'_1,\ldots,H'_m)\cap [0, \tau]^m$ is a polytope.

%Let $N$ be a natural number such that $N>\tau$ and $NH_i$ are very ample. Let $M_i \in |NH_i| $ be general ample divisors. As $(X, \Delta)$ is klt, $(X, \Delta + \sum_{i=1}^m t_i M_i)$ is still klt for any $0 \leq t_i\leq \frac \tau N<1$. Put $H_i' = \frac 1 N M_i$, then $(X, \Delta + \sum_{i=1}^m t_i H'_i)$ is klt for $0 \leq t_i \leq \tau$. Moreover, by \eqref{eq: pseudo-effective is num}, we only need to show that $P(X,\Delta; H'_1,\ldots,H'_m)\cap [0, \tau]^m$ is a rational polytope.

\medskip

For each $i$, let $M_i'\in |A_i'|$ be another general very ample divisor, and let $M_i = \frac{a}{N_i}M'_i$. Let $V_i$ be the affine space $\Delta +\frac{a}{N}E_i+\sum_{i=1}^m \Rr H'_i$. By Theorem \ref{thm: BCHM polytope}, $\Ee_{M_i}(V_i)$ is a rational polytope.
%Since $$P(X,\Delta; H'_1,\ldots,H'_m)\cap [0, \tau]^m=\bigcup P(X,\Delta; H'_1,\ldots,H'_m)\cap [0, \tau]^m
Let $$\Ee_i:=\{(t_1,\ldots,t_{i-1},t_i+a,t_{i+1},\ldots,t_m)|\Delta+M_i+\frac{a}{N}E_i+\sum_{i=1}^m t_i H'_i\in\Ee_{M_i}(V_i)\}.$$
It is clear that $\Ee_i\cap [0, \tau]^m\subseteq P(X,\Delta; H'_1,\ldots,H'_m)\cap [0, \tau]^m$ for each $i$. Now if $(t_1, \ldots, t_m) \in P(X,\Delta; H'_1,\ldots,H'_m)\cap [0, \tau]^m$, there exists at least one $k$ such that $t_k \geq a$ (recall that we chose $a$ such that $K_X+ a(H_1+ \cdots+H_m)$ is not pseudo-effective). This implies that $\Delta+M_k+\frac{a}{N}E_k+\sum_{i=1}^m t_i H'_i-aH'_k\in\Ee_{M_k}(V_k)$, and $(t_1,\ldots,t_m)\in \Ee_k$. Thus,
$$\bigcup_{i=1}^m\Ee_i\cap [0, \tau]^m=P(X,\Delta; H'_1,\ldots,H'_m)\cap [0, \tau]^m.$$
In particular, $P(X,\Delta; H'_1,\ldots,H'_m)\cap [0, \tau]^m $ is a polytope by convexity.
\begin{comment}
\[
\Delta+t_1H_1'+ \ldots + t_k H_k'+ \ldots+ t_mH_m' \sim_\Qq M_k++\frac{a}{N}E_i+\Delta+t_1H_1'+ \ldots +(t_k-a) H_k'+ \ldots+ t_mH_m'
\] lies in the rational polytope $\Ee_{A_k}(V)$.
\end{comment}
%In particular, such $(t_1, \ldots, t_m)$ forms a rational polytope in $[0, \tau]^m$. As $\left(\cup_{i=1}^m \Ee_{A_k}(V)\right)$ is a union of rational polytopes, $P(X,\Delta; H'_1,\ldots,H'_m)\cap [0, \tau]^m $ is a rational polytope by convexity.
\end{proof}
\subsection{Proofs of Theorem \ref{thm: fujita} and Theorem \ref{thm: DCC}}
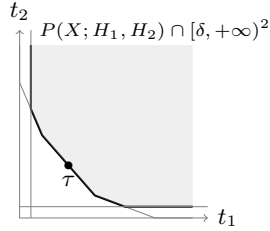
\begin{figure}[h]
      \centering
\begin{tikzpicture}

\draw [help lines,<->] (0,2.5)--(0,0)--(2.5,0);
\draw [help lines] (0,2.3)--(0,1.8)--(0.3,1.1)--(1,0.3)--(1.8,0)--(2.3,0);
\draw [thick] (.15, 2.3)--(0.15,1.45)--(0.3,1.1)--(1,0.3)--(1.4,0.15)--(2.3,0.15);
\draw [help lines] (0.15,0)--(.15, 2.5);
\draw [help lines] (0,.15)--(2.5,.15);
\fill[gray!50, nearly transparent] (2.3,2.3)--(.15, 2.3)--(0.15,1.45)--(0.3,1.1)--(1,0.3)--(1.4,0.15)--(2.3,0.15)-- cycle;

\node  at (.65,.7) {\tiny$\bullet$};
\node [below] at (.65,.7) {\footnotesize$\tau$};
\node[right] at (2.5,0) {\footnotesize$t_1$};
\node[above] at (0,2.5) {\footnotesize$t_2$};
\node at (1.8,2.5){\tiny$P(X; H_1, H_2) \cap [\delta,+\infty)^2$};

\end{tikzpicture}
\caption{A truncated PE-polytope}
\label{fig: a truncated PE-polytope}
\end{figure}

\begin{proof}[Proof of Theorem \ref{thm: fujita}] By Lemma \ref{lem:qfactorial}, we can assume that $X$ is $\Qq$-factorial. Choose arbitrary $\tau=(\tau_1\ldots,\tau_m)$ on the boundary of the truncated PE-polytope $P(X,\Delta;H_1,\ldots,H_m)\cap [\delta,+\infty)^m$ and in the interior of $[\delta,+\infty)^m$, then $K_X+\Delta+\tau_1H_1+ \cdots \tau_m H_m$ is pseudo-effective but not big. If $K_X$ is pseudo-effective, then $P(X,\Delta;H_1,\ldots,H_m)\cap [\delta,+\infty)^m = [\delta,+\infty)^m$, we can assume that $K_X$ is not pseudo-effective.

\medskip
%First, assume the dimension of varieties $n>1$.
%Then, $K_X$ is not nef as the nef cone is the closure of the ample cone. By cone theorem (c.f \cite{KM98} Theorem 3.7), there are countably many rational curves $C_i$, such that $0< -K_X\cdot C_i \leq 2n$, and
\begin{comment}
\[
\overline{NE}(X)=\overline{NE}(X)_{K_X\ge0}+\sum_{i}\Rr_{\ge0}[C_i].
\] Here $\overline{NE}(X)$ is the closure of the cone of effective curves modulo numerically equivalence.
Thus $(K_X+2n H_j)\cdot C_i\geq 0$ for any $i,j$, and $K_X+2n H_j$ is nef for any $j$. In particular, $K_X+2n H_j$ is pseudo-effective.  Hence, $\tau_j\le 2n$ for any $j$.
\end{comment}
\medskip

Since $H_i$ is big and nef, by the same argument in the proof of Theorem \ref{thm: OneDCCfinite}, we can find $\Qq$-divisors $H'_i\sim_{\Qq}H_i$, such that
$$(X,\Gamma)\coloneqq(X,\Delta+\sum_{i=1}^m \tau_iH'_i)$$
is $\frac{\epsilon}{2}$-lc, and we may run a $(K_X+\Gamma)$-MMP, $\phi: X\dashrightarrow Y$, such that $\phi$ is a birational map, $K_Y+\phi_{*}\Gamma$ is semiample and $(Y, \phi_{*}\Gamma)$ is still $\frac{\epsilon}{2}$-lc.
\medskip

Now $K_Y+\phi_{*}\Gamma$ defines a contraction $f: Y\rightarrow Z$. Let $F$ be a general fiber of $f$, we have $\dim(F)>0$. By restricting to $F$, we get
\begin{equation}\label{eq: numerically trivial}
K_{F}+\phi_{*}\Gamma|_{F}= K_F + \phi_{*}\Delta_{F}+\sum_{i=1}^m \tau_iH_{Y,i}|_F\equiv0,
\end{equation}
where $H_{Y,i}$ is the strict transform of $H_i'$ on $Y$ for each $i$. %We note that $H_{Y,i}$ is big and $H_{Y,i}|_F$ is also big for any $i$ since it is the restriction of a big divisor on a general fiber.
%As $H_{Y,i}$ is a big divisor, it can be written as an ample divisor plus an effective divisor which is still preserved when restricted to the general fibre. Thus $H_{Y,i}|_F$ is also big.

%Moreover, $K_F + \sum_{i=1}^m \tau_iH_{Y,i}|_F$ is still $\frac{\epsilon}{2}$-lc. % as $F$ is general.

\medskip

Since $K_F + \phi_{*}\Delta+\sum_{i=1}^m \tau_iH_{Y,i}|_F$ is $\frac{\epsilon}{2}$-lc, according to Theorem \ref{thm: BAB}, $F$ belongs to a bounded family. Hence, we may find a very ample Cartier divisor $M$ on $F$, so that $-K_F\cdot M^{\dim F-1}$ is bounded. Besides, as $H'_i \sim_\Qq H_i$, we have $\phi_*(H'_i) \equiv_{\Qq} \phi_*(H_i)$, and the intersection numbers $d_i:=H_{Y,i}|_F \cdot M^{\dim F-1}= \phi_*(H_i)|_F \cdot M^{\dim F-1}$ is a positive integer. Let $\phi_{*}\Delta|_F=\sum_j a_j\Delta_{F,j}$, where $a_j\in \Ii$, and $\Delta_{F,j}$ are Weil divisors. By intersecting \eqref{eq: numerically trivial} with $M^{\dim F-1}$, we obtain an equation for $\tau_i$,
\begin{equation}\label{eq: relation of tau}
\sum_j a_j b_j+\sum_{i=1}^m \tau_id_i=c,
\end{equation}
where $c = - K_F \cdot M^{\dim F-1}$ is a nonnegative integer with only finite possibilities, $b_j = \Delta_{F,j} \cdot M^{\dim F-1}$ is a positive integer. Since $\tau_i\ge \delta$ for all $i$, and $\Ii$ is finite, $d_i,b_i$ are bounded above and thus only have finite possibilities. Hence there are only finite possibilities for the equations \eqref{eq: relation of tau}. In other words, $(\tau_1,\ldots,\tau_m)$ can only lie on finitely many hyperplanes $L_k \subseteq \Rr^m, k\in I$. By Proposition \ref{prop: polytope}, $P(X,\Delta;H_1,\ldots,H_m)\cap [\epsilon,+\infty)^m$ is a polytope. If $\Theta$ is a facet of $P(X;H_1,\ldots,H_m)\cap [\epsilon,+\infty)^m$ (i.e. an $(m-1)$-dimensional face), then
\[
\Theta \subseteq (\cup_{k \in I} L_k) \cup (\cup_{i=1}^m\{t_i = \delta\}).
\]
By irreducibility, $\Theta \subseteq L_k$ or $\Theta \subseteq \{t_i = \delta\}$, and hence finite possibilities. This shows that the set ${\mathcal P}_{n,m,\Ii,\epsilon,\delta}$ is finite.
\end{proof}
\begin{figure}[h]
      \centering
\begin{tikzpicture}

\draw [help lines,<->] (0,2.8)--(0,0)--(2.5,0);
\draw  (0,2.3)--(0,1.8)--(0.3,1.1)--(1,0.3)--(1.8,0)--(2.3,0);
\draw [thick] (0.3,1.1)--(1,0.3);

\draw  (0,2.4)--(0,2.1)--(0.4,1.3)--(1.2,0.5)--(2,0)--(2.4,0);
\draw [thick] (0.4,1.3)--(1.2,0.5);

\draw  (0,2.6)--(0,2.4)--(1,0.9)--(2.3,0)--(2.5,0);
\draw [thick] (0,2.4)--(1,0.9);

\draw [help lines, ->] (1.4,-.2)--(1.4,.25);
\draw [help lines, ->] (1.8,-.2)--(1.8,.25);
\draw [help lines, ->] (2.2,-.2)--(2.2,.25);

\node [below] at (0.6,0.75) {\tiny$\Theta_1$};
\node [below] at (0.8,1) {\tiny$\Theta_2$};
\node at (0.55, 1.35) {\tiny$\Theta_3$};

\node at (1.5,1.5) {$\cdot$};
\node at (1.6,1.6) {$\cdot$};
\node at (1.7,1.7) {$\cdot$};

\node at (1.4,-.3){\tiny$P_1$};
\node at (1.8,-.3){\tiny$P_2$};
\node at (2.2,-.3){\tiny$P_3$};

\node[right] at (2.5,0) {\footnotesize$t_1$};
\node[above] at (0,2.8) {\footnotesize$t_2$};

\end{tikzpicture}
\caption{A decreasing sequence of PE-polytopes}
\label{fig: DCC}
\end{figure}
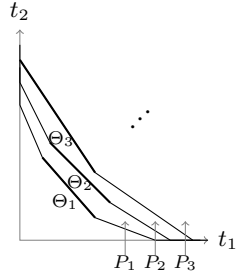

\begin{proof}[Proof of Theorem \ref{thm: DCC}]
By Lemma \ref{lem:qfactorial}, we can assume that $X$ is $\Qq$-factorial. Without loss of generality, we can assume that $K_X$ is not pseudo-effective.

\medskip

  Choose arbitrary $\tau=(\tau_1\ldots,\tau_m)$ on the boundary of the PE-polytope $P(X,\Delta;H_1,\ldots,H_m)$ which is in the interior of $\Rr_{\geq0}^m$. We then proceed the same way as in the proof of Theorem \ref{thm: OneDCCfinite}. We can find $\Qq$-divisors $H'_i\sim_{\Qq}H_i$, such that
$$(X,\Gamma)\coloneqq(X,\Delta+\sum_{i=1}^m \tau_iH'_i)$$
is $\frac{\epsilon}{2}$-lc, and we may run a $(K_X+\Gamma)$-MMP, $\phi: X\dashrightarrow Y$, such that $K_Y+\phi_{*}\Gamma$ is semiample. $K_Y+\phi_{*}\Gamma$ defines a contraction $f: Y\rightarrow Z$. By restricting to a general fiber $F$, we get a similar equation as \eqref{eq: numerically trivial}
\begin{equation}\label{eq: numerically trivial 2}
K_{F}+\phi_{*}\Gamma|_{F}= K_F +\sum_j a_j\Delta_{F,j}+ \sum_{i=1}^m \tau_iH_{Y,i}|_F\equiv0.
\end{equation} Here we write $\phi_{*}\Delta|_{F}=\sum_j a_j\Delta_{F,j}$, where $\Delta_{F,j}$ is a Weil divisor. As $({F},\phi_{*}\Gamma|_{F})$ is $\frac{\epsilon}{2}$-lc, it belongs to a bounded family by Theorem \ref{thm: BAB}. By taking a very ample divisor $M$, and intersecting $M^{\dim F-1}$ with \eqref{eq: numerically trivial 2}, we get

\begin{equation}\label{eq: relation on tau 2}
\sum_j a_j b_j+\sum_{i=1}^m \tau_id_i=c,
\end{equation}
where $c = - K_F \cdot M^{\dim F-1}$ is a nonnegative integer with only finite possibilities, $b_j = \Delta_{Y,j} \cdot M^{\dim F-1}$, $d_i=H_{Y,i}|_F\cdot M^{\dim F-1}$ are nonnegative integers.

\medskip

Now, all $a_jb_j$ form a DCC set, and the finite summations $\sum_j a_j b_j \in \sum \Ii$ also form a DCC set. %\textbf{(see, for example Lemma 3.2 (3,4) ACC for polytope...)}.% ({\bf prove? double check!!}).

\medskip %, and we use superscript $a_j^{(k)}, b_j^{(k)}, \tau^{(k)}_i, n_i^{(k)}$, etc., for $a_j, b_j, \tau_i, n_i$, etc., on $X^{(k)}$ respectively

Set $P^{(k)} = P(X^{(k)}, \Delta^{(k)}; H_{1}^{(k)}, \ldots, H_{m}^{(k)})$. Suppose $P^{(1)} \supsetneq P^{(2)} \supsetneq \cdots \supsetneq P^{(k)} \supsetneq  \cdots $ is a sequence of decreasing polytopes. Then there are \emph{countably} many linear equations $\{L^{(s)}\}_{s\in\Nn}$, where %$L^{(s)}$ are of the forms
\begin{equation}\label{eq: L}
L^{(s)}\coloneqq \sum_{i=1}^m t_id_i^{(s)}+\sum_j a_j^{(s)} b_j^{(s)} -c^{(s)},
\end{equation}
where $d_i^{(s)},b_j^{(s)}\in \Nn$, $a_j^{(s)}\in \Ii$, and $c^{(s)}$ belongs to a finite set. If $\Theta^{(k)}$ is a facet of $P^{(k)}$, then $\Theta^{(k)}$ must lie on the \emph{countably} union of hyperplanes
 \[
 (\cup_{s\in \Nn} \{L^{(s)} =0\}) \cup (\cup_{i=1}^m\{t_i=0\}),
 \]
 where $\{t_i=0\}$ is the $i$-th coordinate hyperplane. If $\Theta^{(k)}$ is not contained in any of  $\{L^{(s)} =0\}$ nor $\{t_i=0\}$, then their intersections on $\Theta^{(k)}$ are measure zero sets. Thus, their countable unions is still of measure zero. This is impossible and hence $\Theta^{(k)}$ must be contained in one of $\{L^{(s)} =0\}$ or $\{t_i=0\}$.

\medskip

 Since $\{P^{(k)}\}_{k\in\Nn}$ is a strictly decreasing sequence, there must exist a sequence $\{\Theta^{(k)}\}_{k\in \Nn}$ such that $\Theta^{(k)} \subseteq P^{(k)}$ is a facet, $\Theta^{(k)} \subseteq \{ t \in \Rr_{\geq 0}^m\mid L^{(k)}(t)=0\}$, and %$L^{(i)}\neq uL^{(j)}$ for any $u\Rr$, $i\neq j$ .
$$\{ t \in \Rr_{\geq 0}^m\mid L^{(k)}(t)=0\}_{k\in \Nn}$$
are different sets (c.f. Figure \ref{fig: DCC}).
%$\{\Theta^{(k)}\}_{k\in \Nn}$ lie on infinitely many hyperplanes (c.f. Figure \ref{fig: DCC}). By passing to a subsequence, we can assume that they do not lie on (finite) coordinate hyperplanes. Let $\Theta^{(k)} \subseteq \{L^{(k)}=0\}$.
Using the DCC property, by passing to a subsequence, we can assume that, in \eqref{eq: L}, $c^{(k)}=c$ is fixed for all $k$, the sequence $\{\sum_j a^{(k)}_j b^{(k)}_j\}_{k\in\Nn}$ is non-decreasing, and the sequence $\{n^{(k)}_i\}_{k\in\Nn}$ is non-decreasing for each $i$.

Now, the set
\begin{equation}\label{eq: region increases}
\{ t \in \Rr_{\geq 0}^m\mid L^{(k)}(t) \geq 0\}_{k\in \Nn}
\end{equation} is strictly increasing. However, by assumption, we have
\[
\{ t \in \Rr_{\geq 0}^m\mid L^{(k)}(t) \geq 0\} \supseteq P^{(k)} \supsetneq P^{(k+1)} \supseteq \Theta^{(k+1)}.
\]
This is impossible as there exists $\theta \in \Theta^{(k+1)}$ lies on $L^{(k+1)}=0$ which is not contained in $\{ t \in \Rr_{\geq 0}^m\mid L^{(k)}(t) \geq 0\}$ by \eqref{eq: region increases}.
\end{proof}
\bibliographystyle{alpha}

\end{document}